\documentclass[12pt,twoside,a4paper]{amsart}
\usepackage{amssymb}
\usepackage{graphicx}
\usepackage[all]{xy}

\date{\today}

\usepackage[latin1]{inputenc}
\usepackage[T1]{fontenc}


\def\dbar{\bar\partial}

\def\R{{\mathbb R}}
\def\C{{\mathbb C}}
\def\P{{\mathbb P}}

\def\O{{\mathcal O}}

\def\L{{\mathcal L}}

\def\Re{{\rm Re\,  }}

\def\L{{\mathcal L}}
\def\U{{\mathcal U}}


\def\codim{\text{codim}\,}
\def\ann{\text{ann}\,}

\def\1{\mathbf 1}

\def\m{{\mathfrak m}}

\def\np{{\rext NP}}
\def\soc{\text{Soc}}


\def\a{{\mathfrak a}}
\def\ord{\text {ord}}
\def\vE{\text {ord}_E}
\def\jac{\text {Jac}}
\def\I{{\mathcal I}}
\def\np{\text{NP}}
\def\1{\mathbf 1}


\newcommand{\fa}{\mathfrak{a}}

\newcommand{\fm}{\mathfrak{m}}

\newcommand{\cI}{\mathcal{I}}

\newcommand{\cO}{\mathcal{O}}

\def\be{\begin{equation}}
\def\ee{\end{equation}}

\newtheorem{thm}{Theorem}[section]
\newtheorem{lma}[thm]{Lemma}
\newtheorem{cor}[thm]{Corollary}
\newtheorem{prop}[thm]{Proposition}

\newtheorem*{thmA}{Theorem A}
\newtheorem*{thmB}{Theorem B}
\newtheorem*{thmC}{Theorem C}
\newtheorem*{questionD}{Question D}

\theoremstyle{remark}
\newtheorem{remark}[thm]{Remark}
\newtheorem{ex}[thm]{Example}

\theoremstyle{definition}
\newtheorem{df}[thm]{Definition}

\theoremstyle{remark}

\numberwithin{equation}{section}

\begin{document}

\title[On Bochner-Martinelli residue currents...]{On Bochner-Martinelli residue currents and their annihilator ideals}

\date{\today}
\thanks{First author partially supported by the NSF
  and the Swedish Research Council.
  Second author partially supported by 
  the Royal Swedish Academy of Sciences and the Swedish Research Council.}
\author{Mattias Jonsson \& Elizabeth Wulcan}

\address{Dept of Mathematics, University of Michigan, Ann Arbor \\ MI 48109-1109\\ USA}

\email{mattiasj@umich.edu, wulcan@umich.edu}

\subjclass{}

\keywords{}

\begin{abstract}
We study the residue current $R^f$ of Bochner-Martinelli type
associated with a tuple $f=(f_1,\dots,f_m)$
of holomorphic germs at $0\in\mathbf{C}^n$,
whose common zero set equals the origin.
Our main results are a geometric description of $R^f$
in terms of the Rees valuations associated with the ideal $(f)$ generated by $f$ and a characterization of when the annihilator ideal of $R^f$ equals $(f)$.
\end{abstract}

\maketitle

\section{Introduction}\label{intro}
Residue currents are generalizations of classical one-variable residues and can be thought of as currents representing ideals of holomorphic functions. In ~\cite{PTY} Passare-Tsikh-Yger introduced residue currents based on the Bochner-Martinelli kernel. Let $f=(f_1,\ldots,f_m)$ be a tuple of (germs of) holomorphic functions at $0\in\C^n$, such that $V(f)=\{f_1=\ldots = f_m=0\}=\{0\}$. (Note that we allow $m > n$.) For each ordered multi-index $\I=\{i_1,\ldots,i_n\}\subseteq \{1,\ldots, m\}$ let 
\begin{equation}\label{blanc}
R^f_\I=\dbar |f|^{2\lambda}\wedge
c_n \sum_{\ell=1}^n(-1)^{\ell-1}
\frac
{\overline{f_{i_\ell}}\bigwedge_{q\neq \ell} \overline{df_{i_q}}}
{|f|^{2n}}\bigg |_{\lambda=0},
\end{equation}
where $c_n=(-1)^{n(n-1)/2}(n-1)!$, 
$|f|^2=|f_1|^2+\ldots+|f_m|^2$, and $\alpha |_{\lambda =0}$ denotes the analytic continuation of the form $\alpha$ to $\lambda=0$. 
Moreover, let $R^f$ denote the vector-valued current with entries $R^f_\I$; we will refer to this as the \emph{Bochner-Martinelli residue current} associated with $f$. Then $R^f$ is a well-defined $(0,n)$-current with support at the origin and $\overline g R^f_\I=0$ if $g$ is a holomorphic function that vanishes at the origin. It follows that the coefficients of the $R^f_\I$ are just finite sums of holomorphic derivatives at the origin. 

Let $\O^n_0$ denote the local ring of germs of holomorphic functions at $0\in\C^n$. Given a current $T$ let $\ann T$ denote the (holomorphic) \emph{annihilator ideal} of $T$, that is, 
\[
\ann T=\{h\in \O^n_0, hT=0\}.
\]
Our main result concerns $\ann R^f=\bigcap \ann R^f_\I$. Let $(f)$ denote the ideal generated by the $f_i$ in $\O^n_0$. Recall that $h\in\O^n_0$ is in the integral closure of $(f)$, denoted by $\overline{(f)}$, if $|h|\leq C|f|$, for some constant $C$. Moreover, recall that $(f)$ is a \emph{complete intersection ideal} if it can be generated by $n=\codim V(f)$ functions. Note that this condition is slightly weaker than $\codim V(f)=n=m$.

\begin{thmA}
Suppose that $f$ is a tuple of germs of holomorphic functions at $0\in\C^n$ such that $V(f)=\{0\}$. Let $R^f$ be the corresponding Bochner-Martinelli residue current. Then
\begin{equation}\label{eqa}
\overline{(f)^n}\subseteq\ann R^f\subseteq (f).
\end{equation}
The left inclusion in \eqref{eqa} is strict whenever $n\geq 2$. The right inclusion is an equality if and only if $(f)$ is a complete intersection ideal. 
\end{thmA}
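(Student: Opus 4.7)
The strategy is to reduce everything to a log-resolution $\pi : X \to (\C^n, 0)$ of the ideal $(f)$. In local coordinates $w_1, \ldots, w_n$ on $X$ at a point where several components $E_1, \ldots, E_k$ of the exceptional divisor meet, write $\pi^* f_i = w^{\alpha_i} \hat f_i$ with the tuple $(\hat f_i)$ generating the unit ideal. Pulling back \eqref{blanc} and letting $\lambda \to 0$ then presents each $\pi^* R^f_\I$ as a finite sum of tensor products of one-variable principal-value currents $\bar\partial[1/w_j^{a_j}]$ multiplied by smooth factors, with exponents $a_j$ controlled by the Rees valuations $v_{E_j}(f)$. All three assertions of Theorem~A are then read off from this monomial picture.

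For the left inclusion $\overline{(f)^n} \subseteq \ann R^f$, the valuative criterion for integral closure says $h \in \overline{(f)^n}$ iff $v_E(h) \geq n\, v_E(f)$ at every Rees valuation. On $X$ this forces $\pi^* h$ to be locally divisible by $\prod_j w_j^{n\, v_{E_j}(f)}$, enough to kill every factor $\bar\partial[1/w_j^{a_j}]$ occurring in $\pi^* R^f$; pushing forward gives $h R^f = 0$. For the right inclusion $\ann R^f \subseteq (f)$, I would combine $R^f$ with a weighted Bochner--Martinelli form $U^f$ into an Andersson-type Koppelman division formula, which represents an arbitrary $h \in \O^n_0$ as $h = \sum_i f_i q_i + (\text{term linear in } h R^f)$ with $q_i$ holomorphic, so that the vanishing $h R^f = 0$ immediately puts $h$ in $(f)$.

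Strictness of the left inclusion for $n \geq 2$ is already visible for $f = (z_1, \ldots, z_n)$: in that case $R^f$ is the Coleff--Herrera product (up to sign), classical duality gives $\ann R^f = \fm$, whereas $\overline{(f)^n} = \fm^n \subsetneq \fm$.

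For the final equivalence it suffices, given the right inclusion, to decide when $(f) \cdot R^f = 0$. If $(f)$ is a complete intersection, pick a minimal generating $n$-tuple $g = (g_1, \ldots, g_n)$ and write $f = Ag$ for a holomorphic matrix $A$. A transformation law for Bochner--Martinelli currents (cf.\ Passare--Tsikh--Yger) then expresses each $R^f_\I$ as the Coleff--Herrera current $R^g = \bar\partial[1/g_1] \wedge \cdots \wedge \bar\partial[1/g_n]$ multiplied by a minor of $A$; since $g_j R^g = 0$ and $(g) = (f)$, one concludes $(f) \cdot R^f = 0$. Conversely, if $(f)$ is not a complete intersection, the monomial description above must include, at some point of $X$, a configuration of exponents $\alpha_i$ that cannot be realized by any $n$-subtuple of generators of $(f)$; tracing this through the explicit formula for $\pi^* R^f_\I$ produces a specific $f_i$ and $\I$ for which $f_i \cdot R^f_\I$ is a nonzero residue distribution. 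I expect this converse to be the main obstacle: turning the algebraic ``non-CI'' defect into a quantitative nonvanishing statement requires careful bookkeeping of how the alternating sum in \eqref{blanc} interacts with the combinatorics of the $\alpha_i$ on the resolution, and this is where the geometric description of $R^f$ in terms of the Rees valuations announced in the abstract must be made precise.
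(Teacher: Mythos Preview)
Your sketch of the two inclusions in \eqref{eqa} is fine and matches what the paper cites as already known. The two new assertions, however, are not established by your argument.

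\textbf{Strictness of the left inclusion.} You only check the single example $f=(z_1,\dots,z_n)$. The claim is that $\overline{(f)^n}\subsetneq\ann R^f$ for \emph{every} tuple $f$ with $V(f)=\{0\}$ and $n\ge2$, so one example proves nothing. The paper's argument is uniform: for any essential multi-index $\I$ one has $\m\cdot\jac(f_\I)\subseteq\ann R^f$ (because $R^f_\I\wedge df_{i_1}\wedge\dots\wedge df_{i_n}$ is a point mass), while a valuation count along a Rees divisor $E$ for which $\I$ is $E$-essential gives $\ord_E(z_k\jac(f_\I))\le n\,\ord_E(\a)-n+1<\ord_E(\overline{\a^n})$ when $n\ge2$. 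Hence $z_k\jac(f_\I)\in\ann R^f\setminus\overline{(f)^n}$ for arbitrary $f$.

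\textbf{The equivalence with complete intersection.} Your direction ``CI $\Rightarrow$ equality'' via a transformation law is close in spirit to what the paper does (the paper uses Andersson's metric-independence instead of a PTY-type transformation law, but the effect is the same). The converse is where your proposal has a genuine gap: the idea that a non-CI ideal must produce, on the resolution, a ``configuration of exponents $\alpha_i$ that cannot be realized by any $n$-subtuple'' is not a statement one can make precise, and there is no clear mechanism by which such a configuration forces $f_i R^f_\I\ne0$ for some specific $i,\I$. The paper avoids this combinatorics entirely. Its key input is Hickel's theorem: $\jac(f_\I)\in(f_\I)$ if and only if $\codim V(f_\I)<n$, and when $\codim V(f_\I)=n$ the class of $\jac(f_\I)$ generates the socle of $\O_0^n/(f_\I)$. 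Combined with the fact (from the positivity of $R^f_\I\wedge df_\I$) that $\jac(f_\I)\notin\ann R^f_\I$ whenever $\I$ is essential, this forces, under the hypothesis $\ann R^f=(f)$, first that $\codim V(f_\I)=n$ for every essential $\I$, and then (via the socle statement) that $\ann R^f_\I\subseteq(f_\I)$, whence $(f)=\bigcap_\I(f_\I)$ and $(f)$ is generated by $n$ elements. None of this Jacobian/socle machinery appears in your outline, and it is the actual engine of the proof.
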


The new results in Theorem A are the last two statements. The left and right inclusions in \eqref{eqa} are due to Passare-Tsikh-Yger ~\cite{PTY} and Andersson ~\cite{A}, respectively. Passare-Tsikh-Yger defined currents $R^f_\I$ also when $\codim V(f) < n$. The inclusions \eqref{eqa} hold true also in this case; one even has 
$\overline{(f)^{\min (m,n)}}\subseteq\ann R^f\subseteq (f)$. 
Furthermore, Passare-Tsikh-Yger showed that $\ann R^f=(f)$ if $m=\codim V(f)$. More precisely, they proved that in this case the only entry $R^f_{\{1,\ldots,m\}}$ of $R^f$ coincides with the classical \emph{Coleff-Herrera product} 
\begin{equation*}
R^f_{CH}=
\dbar \left [\frac{1}{f_1}\right ]\wedge\cdots\wedge 
\dbar \left [\frac{1}{f_m}\right ],
\end{equation*}
introduced in ~\cite{CH}. 
The current $R^f_{CH}$ represents the ideal in the sense that $\ann R^f_{CH}=(f)$ as proved by Dickenstein-Sessa ~\cite{DS} and Passare ~\cite{P}. This so-called \emph{Duality Principle} has been used for various purposes, see ~\cite{BGVY}. 
Any ideal of holomorphic functions can be represented as the annihilator ideal of a (vector valued) residue current. However, in general this current is not as explicit as the Coleff-Herrera product, see ~\cite{AW}. 

Thanks to their explicitness Bochner-Martinelli residue currents have found many applications, see for example ~\cite{AG}, ~\cite{ASS}, ~\cite{BY}, and ~\cite{VY}. Even though the right inclusion in \eqref{eqa} is strict in general, $\ann R^f$ is large enough to in some sense capture the ``size'' of $(f)$. For example \eqref{eqa} (or rather the general version stated above) gives a proof of the Brian{\c c}on-Skoda Theorem ~\cite{BS}, see also ~\cite{A}. The inclusions in \eqref{eqa} are central also for the applications mentioned above. 

The proof of Theorem A has three ingredients.
First, we use a result of Hickel ~\cite{Hi} relating the ideal $(f)$ to
the Jacobian determinant of $f$.
Second, we rely on a result by Andersson, which says that
under suitable hypotheses, the current he constructs in ~\cite{A} is
independent of the choice of Hermitian metric, see also Section ~\ref{rescurr}.

The third ingredient, which is of independent interest,
is a geometric description of the Bochner-Martinelli
current, and goes as follows.
Let $\pi:X\to (\C^n,0)$ be a \emph{log-resolution} of $(f)$, see Definition ~\ref{loglog}. We say that a multi-index $\I=\{i_1,\ldots,i_n\}$ is \emph{essential} if there is an exceptional prime $E\subseteq\pi^{-1}(0)$ of $X$ such that the mapping $[f_{i_1}\circ \pi:\ldots :f_{i_n}\circ\pi]: E\to \C\P^{n-1}$ is surjective and moreover $\vE(f_{i_k})\leq \vE(f_{\ell})$ for $1\leq k\leq n, 1\leq\ell\leq m$, see Section ~\ref{essential} for more details. The valuations $\vE$ are precisely the \emph{Rees valuations} of $(f)$. 

\begin{thmB}
Suppose that $f$ is a tuple of germs of holomorphic functions at $0\in\C^n$ such that $V(f)=\{0\}$. Then the current $R^f_\I\not\equiv 0$ if and only if $\I$ is essential.
\end{thmB}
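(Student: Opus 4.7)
The plan is to pull back the regularization defining $R^f_\I$ via a log-resolution $\pi\colon X\to(\C^n,0)$ of $(f)$, decompose the pulled-back current as a sum of local contributions from the exceptional primes of $\pi^{-1}(0)$, and show that only essential primes contribute.

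At a generic point of an exceptional prime $E$, choose local coordinates $w=(w_1,\ldots,w_n)$ with $E=\{w_1=0\}$, so that $f_i\circ\pi=u_i w_1^{a_i}$ with $a_i=\vE(f_i)$ and $u_i$ holomorphic units. Set $a=\min_i a_i$, $S=\set{i:a_i=a}$, and $V=\sum_{i\in S}|u_i|^2$; then $|f|^2=|w_1|^{2a}V$, with $V$ smooth and strictly positive at the generic point. Substituting $\bar f_i=\bar u_i\bar w_1^{a_i}$ and $d\bar f_i=\bar w_1^{a_i}d\bar u_i+a_i\bar u_i\bar w_1^{a_i-1}d\bar w_1$ into the Bochner--Martinelli numerator gives
\[
K_\I=\bar w_1^B\hat K_\I+\bar w_1^{B-1}\,d\bar w_1\wedge\hat L_\I,
\]
where $B=\sum_k a_{i_k}$, $\hat K_\I=\sum_\ell(-1)^{\ell-1}\bar u_{i_\ell}\bigwedge_{q\neq\ell}d\bar u_{i_q}$ is the Bochner--Martinelli form in the $u_{i_k}$'s, and $\hat L_\I$ is a $(0,n-2)$-form in the $d\bar u$'s. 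A symmetry argument pairing ordered pairs $(\ell,p)$ with $(p,\ell)$ shows that $\hat L_\I$ vanishes whenever all $a_{i_k}$ coincide, in particular when $\I\subseteq S$.

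Substituting into $\bar\partial|f|^{2\lambda}\wedge K_\I/|f|^{2n}$, using $\bar\partial|f|^{2\lambda}=a\lambda|w_1|^{2a\lambda}V^\lambda/\bar w_1\cdot d\bar w_1+\lambda|w_1|^{2a\lambda}V^{\lambda-1}\bar\partial V$, and invoking the standard identity $\lim_{\lambda\to 0}\lambda|w_1|^{2a\lambda}/(w_1^{na}\bar w_1)\,d\bar w_1=(1/a)\bar\partial[1/w_1^{na}]$, we extract the local contribution from $E$. If $\I\not\subseteq S$, an extra factor $\bar w_1^{B-na}$ with $B-na\geq 1$ annihilates the resulting $\bar\partial[1/w_1^{na}]$-residue (since $\bar w_1\cdot\bar\partial[1/w_1^N]=0$), so the contribution vanishes. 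If $\I\subseteq S$, then $\hat L_\I=0$, the auxiliary $\bar\partial V$ terms vanish (they give $\lambda$ times an expression regular at $\lambda=0$, and the only part of $\bar\partial V\wedge\hat K_\I$ surviving the wedge is $\partial_{\bar w_1}V\,d\bar w_1\wedge\hat K_\I$, whose coefficient lacks a $1/\bar w_1$ and hence produces no pole at $\lambda=0$), and the local contribution becomes
\[
R^f_{\I,E}=c\cdot\bar\partial[1/w_1^{na}]\wedge\hat K_\I/V^n
\]
for a nonzero constant $c$. Pushing forward to $\C^n$ and pairing with an $(n,0)$-test form reduces to an integral over $E$ of $\hat K_\I/V^n$ wedged with an $(n-1,0)$-form built from iterated $w_1$-derivatives of the pulled-back test form. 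In local coordinates on $E$, $\hat K_\I|_E$ equals the $(n\times n)$-determinant with rows $\bar u$ and $\partial_{\bar w_j}\bar u$ ($j=2,\ldots,n$), which vanishes identically iff $\bar u$ is everywhere linearly dependent on the tangent vectors $\partial_{\bar w_j}\bar u$, iff the rational map $[u]=[u_{i_1}:\cdots:u_{i_n}]\colon E\dashrightarrow\P^{n-1}$ is not dominant. Since $E$ is projective of complex dimension $n-1$, dominance is equivalent to surjectivity of $[f_{i_1}\circ\pi:\cdots:f_{i_n}\circ\pi]\colon E\to\P^{n-1}$; combined with $\I\subseteq S$ (the minimal-valuation condition), this is exactly the definition of $\I$ being essential for $E$. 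Summing the contributions $\pi_*R^f_{\I,E}$ over exceptional primes yields $R^f_\I$, and the theorem follows provided no global cancellation occurs.

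The main obstacle is the non-cancellation of contributions from distinct essential exceptional primes after pushforward. Primes $E$ with different values of $\vE((f))$ contribute linear functionals involving different orders of Taylor derivative at the origin and are automatically independent; for distinct essential primes $E_1,E_2$ with the same valuation of $(f)$, one invokes the linear independence of distinct Rees valuations (a weak-approximation theorem) to produce test forms with prescribed valuations at each $E_i$, thereby isolating a single contribution and ruling out cancellation.
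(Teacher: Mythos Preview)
Your local analysis on the log-resolution---showing that the restriction $\widetilde R_\I\1_E$ of the pulled-back current vanishes unless $\I$ is $E$-essential---is correct and matches the paper's approach to the ``only if'' direction.

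The gap is in the ``if'' direction: your non-cancellation argument is inadequate. The claim that primes with different $\vE((f))$ yield contributions of distinct Taylor order at the origin is unjustified; the order of the pushed-forward distribution depends not only on $n\,\vE((f))$ but also on the order of vanishing of the Jacobian of $\pi$ along $E$, and there is no reason these combine to give distinct values for distinct primes. The weak-approximation idea for primes of equal $\vE((f))$ is too vague to be a proof: even after killing the contributions from $E_2,\dots,E_k$ by choosing $h$ with large $\ord_{E_j}(h)$, you still must show that the surviving integral over $E_1$ is nonzero, and nothing you have said rules out $\pi_*(\widetilde R_\I\1_{E_1})=0$ even for a single prime---proper pushforward of currents has nontrivial kernel, so nonvanishing upstairs does not automatically give nonvanishing downstairs.

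The paper circumvents cancellation entirely by a positivity argument. It wedges the pulled-back current with $\pi^*(df_{i_1}\wedge\cdots\wedge df_{i_n})/(2\pi i)^n$ to obtain an $(n,n)$-current $\widetilde M_\I$ on $X$ which is an explicit nonnegative measure: on each exceptional prime $E$ it is a smooth nonnegative density, generically strictly positive exactly when $\I$ is $E$-essential. Since positive measures cannot cancel under proper pushforward and total mass is preserved, $\pi_*\widetilde M_\I=R^f_\I\wedge df_{i_n}\wedge\cdots\wedge df_{i_1}/(2\pi i)^n$ has strictly positive mass whenever $\I$ is essential, forcing $R^f_\I\not\equiv0$. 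This positivity trick is the key idea you are missing.
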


As is well known, one can view $R^f$ as the pushforward of
a current on a log-resolution of $(f)$.
The support on  the latter current is then exactly the exceptional
components associated with the Rees valuations of $(f)$, see Section ~\ref{coeff}.

Recall that if $(f)$ is a complete intersection ideal, then $(f)$ is in fact generated by $n$ of the $f_i$. This follows for example by Nakayama's Lemma.

\begin{thmC}
Suppose that $f$ is a tuple of germs of holomorphic functions at $0\in\C^n$ such that $V(f)=\{0\}$ and such that $(f)$ is a complete intersection ideal. Then $\I=\{i_1,\ldots,i_n\}$ is essential if and only if $f_{i_1}, \ldots f_{i_n}$ generates $(f)$. Moreover 
\begin{equation}\label{formen}
R^f_\I= C_\I ~\dbar \left [\frac{1}{f_{i_1}}\right ]\wedge\cdots\wedge 
\dbar \left [\frac{1}{f_{i_n}}\right ],
\end{equation}
where $C_\I$ is a non-zero constant.
\end{thmC}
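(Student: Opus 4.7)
The plan is to establish the two claims in turn.

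For the equivalence of essentiality and generation, suppose $\I$ is essential with witness $E$, an exceptional prime of a log-resolution $\pi$, and set $a=\vE((f))$. Writing $f_{i_k}\circ\pi=s^a u_{i_k}$ in a local equation $s$ for $E$, essentiality gives $\vE(f_\ell)\geq a$ for all $\ell$ and surjectivity of $[u_{i_1}:\cdots:u_{i_n}]\colon E\to\C\P^{n-1}$. For any $(c_1,\ldots,c_n)\in\C^n\setminus\{0\}$, the function $\sum_k c_k u_{i_k}$ cannot vanish identically on $E$---otherwise the image of the map would lie in a hyperplane---so $\vE(\sum_k c_k f_{i_k})=a$, whereas every element of $\fm(f)$ has $\vE$-value strictly greater than $a$. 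Hence the classes of the $f_{i_k}$ in the $\C$-vector space $(f)/\fm(f)$ are linearly independent; since $(f)$ is an $\fm$-primary complete intersection in $\O^n_0$, one has $\dim_\C(f)/\fm(f)=n$ by Krull, and Nakayama's lemma yields that the $f_{i_k}$ generate $(f)$. Conversely, if $(f_{i_1},\ldots,f_{i_n})=(f)$, consider the blow-up $Y\to(\C^n,0)$ along $(f)$: since $(f)$ is a regular sequence of length $n$, its reduced exceptional fibre is $\C\P^{n-1}$ and the morphism $[f_{i_1}:\cdots:f_{i_n}]\colon Y\to\C\P^{n-1}$ restricts to the identity there. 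An exceptional prime $E$ of any log-resolution $\pi\colon X\to(\C^n,0)$ factoring through $Y$ that dominates this reduced fibre then satisfies the essentiality conditions, using $f_\ell=\sum_k a_{\ell k}f_{i_k}$ to bound $\vE(f_\ell)\geq\vE((f))$.

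For the formula \eqref{formen}, the plan is to reduce to the $m=n$ case already covered by Passare-Tsikh-Yger. Reorder so $\I=\{1,\ldots,n\}$, let $f'=(f_{i_1},\ldots,f_{i_n})$, and use the first part to write $f=Mf'$ for a holomorphic $m\times n$ matrix $M$ of maximal rank at the origin. Then $|f|^2=\langle Bf',f'\rangle$ where $B=M^*M$ is a positive-definite Hermitian matrix of germs, and the numerator in \eqref{blanc} involves only the components $f_{i_k}=f'_k$. Consequently \eqref{blanc} displays $R^f_\I$ as a Bochner-Martinelli-type residue current attached to $f'$, but computed with respect to the Hermitian metric $B$ on the trivial bundle $\C^n\times\C^n$ in place of the Euclidean one. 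Since $(f')=(f)$ is a complete intersection, Andersson's theorem---the second ingredient cited in the introduction---says that this residue current is independent of the Hermitian metric, so $R^f_\I$ coincides with the standard Bochner-Martinelli current of $f'$. Finally, because $f'$ is a tuple of $n=\codim V(f')$ germs, Passare-Tsikh-Yger identifies this current with the Coleff-Herrera product $\dbar[1/f_{i_1}]\wedge\cdots\wedge\dbar[1/f_{i_n}]$, yielding \eqref{formen} for a nonzero constant $C_\I$ determined by the ordering convention.

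The step I expect to require the most care is the last identification. After substitution, the numerator of $R^f_\I$ uses $\bar f'_\ell$ rather than the Hermitian dual $(B\bar f')_\ell$ one would naively associate with the metric $B$, so the match with Andersson's construction is not literal; establishing it rests on the description of the Bochner-Martinelli current as a specific piece of Andersson's general formalism, as recalled in Section~\ref{rescurr}.
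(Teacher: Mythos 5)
Your proof of the equivalence ``$\I$ essential $\iff$ $f_\I$ generates $(f)$'' takes a genuinely different route from the paper's. The paper obtains this equivalence by chaining Theorem~B ($\I$ essential $\iff R^f_\I\not\equiv 0$) with the second half of Theorem~\ref{annihilatorn} ($R^f_\I\not\equiv 0\iff f_\I$ generates $(f)$), whereas your argument is direct and does not touch the current at all: you show that $E$-essentiality forces the images of $f_{i_1},\dots,f_{i_n}$ in the $n$-dimensional vector space $(f)/\fm(f)$ to be linearly independent, and you use the fact that the reduced exceptional fiber of the blow-up along a regular sequence of length $n$ is $\C\P^{n-1}$ for the converse. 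Both arguments are correct. Yours is self-contained and has the virtue of explaining the equivalence geometrically; the paper's is more economical given that Theorems~B and~\ref{annihilatorn} are established independently for other purposes, and has the advantage of producing the formula \eqref{formen} as a by-product.

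For the formula \eqref{formen} itself, there is a genuine gap at exactly the step you flag as delicate, and it is not merely a matter of bookkeeping. It is \emph{not} true that \eqref{blanc} exhibits $R^f_\I$ as the Bochner--Martinelli current of $f'$ computed in the metric $B=M^*M$. In Andersson's construction the current attached to $f'$ and a Hermitian metric $h$ is built from the $h$-dual section $\tilde s_\I$ of $f'$, and for $h=B$ this is related to the trivial dual $s_\I=\sum_j\bar f_{i_j}e_{i_j}$ appearing in \eqref{blanc} by a nonconstant matrix, not by a scalar; so the two sections give a priori different currents. What closes the gap --- and what the paper's proof of Theorem~\ref{annihilatorn} carries out explicitly --- is the identity
\begin{equation*}
\tilde s_\I\wedge(\dbar\tilde s_\I)^{n-1}=\det(\Psi)\,s_\I\wedge(\dbar s_\I)^{n-1},
\end{equation*}
valid precisely because one only retains the top exterior degree $\Lambda^n\widetilde V$. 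This shows that the Andersson current $R(f_\I)$ of $f'$ in the metric determined by $\Psi$ equals $\det(\Psi(0))$ times $R_\I$, so the two agree up to a nonzero constant; after that, metric independence for a codimension-$n$ complete intersection and the Passare--Tsikh--Yger identification give \eqref{formen}. Pointing to ``Andersson's general formalism'' does not by itself complete the argument, because that formalism gives metric independence for the current built from the correct metric-dual section $\tilde s_\I$, not for the one built from $s_\I$; the determinant identity is the step that transfers the conclusion from one to the other.
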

Theorems B and C generalize previous results for monomial ideals. In ~\cite{W} an explicit description of $R^f$ is given in case the $f_i$ are monomials; it is expressed in terms of the Newton polytope of $(f)$. From this description a monomial version of Theorem A can be read off. Also, it follows that in the monomial case $\ann R^f$ only depends on the ideal $(f)$ and not on the particular generators $f$. This motivates the following question.

\begin{questionD}
Let $f$ be a tuple of germs of holomorphic functions such that $V(f)=\{0\}$. Let $R^f$ be the corresponding Bochner-Martinelli residue current. Is it true that $\ann R^f$ only depends on the ideal $(f)$ and not on the particular generators $f$? 
\end{questionD}
Computations suggest that the answer to Question D may be positive; see Remark ~\ref{Dremark}. If $\codim V(f)<n$, then $\ann R^f$ may in fact depend on $f$ even though the examples in which this happens are somewhat pathological, see for example ~\cite[Example~3]{A}. A positive answer to Question D would imply that we have an ideal canonically associated with a given ideal; it would be interesting to understand this new ideal algebraically.

This paper is organized as follows. In Sections ~\ref{rescurr} and ~\ref{reesprelim} we present some necessary background on residue currents and Rees valuations, respectively. The proof of Theorem B occupies Section ~\ref{coeff}, whereas Theorems A and C are proved in Section ~\ref{annsection}. In Section ~\ref{decompsection} we discuss a decomposition of $R^f$ with respect to the Rees valuations of $(f)$. In the last two sections we interpret our results in the monomial case and illustrate them by some examples.

\smallskip

\noindent
\textbf{Acknowledgment:}
We would like to thank Mats Boij and H{\aa}kan Samuelsson for valuable discussions. This work was partially carried out when the authors were visiting
the Mittag-Leffler Institute.

\section{Residue currents}\label{rescurr}

We will work in the framework from Andersson ~\cite{A} and use the fact that the residue currents $R^f_\I$ defined by \eqref{blanc} appear as the coefficients of a vector bundle-valued current introduced there. Let $f=(f_1,\ldots, f_m)$ be a tuple of germs of holomorphic functions at $0\in\C^n$. We identify $f$ with a section of the dual bundle $V^*$ of a trivial vector bundle $V$ over $\mathbb C^n$ of rank $m$, endowed with the trivial metric. 
If $\{e_i\}_{i=1}^m$ is a global holomorphic frame for $V$ and $\{e^*_i\}_{i=1}^m$
is the dual frame, we can write $f=\sum_{i=1}^m f_i e_i^*$. We let $s$ be the dual section $s=\sum_{i=1}^m \bar f_i e_i$ of $f$.

Next, we let 
\begin{equation*}
u =
\sum_\ell\frac{s\wedge(\dbar s)^{\ell-1}}{|f|^{2\ell}},
\end{equation*}
where $|f|^2=|f_1|^2+\ldots + |f_m|^2$. Then $u$ is a section of $\Lambda(V\oplus T_{0,1}^*(\mathbb C^n))$ (where $e_j\wedge d\bar z_i=-d\bar z_i\wedge e_j$), that is clearly well defined and smooth outside $V(f)=\{f_1=\ldots = f_m=0\}$, and moreover 
\begin{equation*}
\dbar|f|^{2\lambda}\wedge u, 
\end{equation*}
has an analytic continuation as a current to $\Re \lambda > -\epsilon$. We denote the value at $\lambda=0$ by $R$. Then $R$ has support on $V(f)$ and $R=R_p+\ldots +R_\mu$, where $p=\codim V(f)$, $\mu=\min(m,n)$, and where $R_k\in \mathcal D'_{0,k}(\mathbb C^n,\Lambda^k V)$. In particular if $V(f)=\{0\}$, then 
$R=R_n$. 

We should remark that Andersson's construction of residue currents works for sections of any holomorphic vector bundle equipped with a Hermitian metric. 
In our case (trivial bundle and trivial metric), however, the coefficients of $R$ are just the residue currents $R^f_\I$ defined by Passare-Tsikh-Yger ~\cite{PTY}. Indeed, for $\I=\{i_1,\ldots, i_k\}\subseteq\{1,\ldots, m\}$ let $s_\I$ be the section $\sum_{j=1}^k \bar f_{i_j} e_{i_j}$, that is, the dual section of $f_\I=\sum_{j=1}^k f_{i_j} e^*_{i_j}$. Then we can write 
$u$ as a sum, taken over subsets $\I=\{i_1,\ldots, i_k\}\subseteq\{1,\ldots, m\}$, of terms
\begin{equation*}
u_\I= 
\frac{s_\I\wedge (\dbar s_\I)^{k-1}}{|f|^{2k}}.
\end{equation*}
The corresponding current,
\begin{equation*}
\dbar|f|^{2\lambda}\wedge u_\I|_{\lambda=0}
\end{equation*}
is then merely the current 
\begin{equation*}
R^f_\I:=\dbar |f|^{2\lambda}\wedge
c_k \sum_{\ell=1}^k(-1)^{\ell-1}
\frac
{\overline{f_{i_\ell}}\bigwedge_{q\neq \ell} \overline{df_{i_q}}}
{|f|^{2k}}\bigg |_{\lambda=0},
\end{equation*}
where $c_k=(-1)^{k(k-1)/2}(k-1)!$, times the frame element $e_\I=e_{i_k}\wedge\cdots\wedge e_{i_1}$; we denote it by $R_\I$. Throughout this paper we will use the notation $R^f$ for the vector valued current with entries $R^f_\I$, whereas $R$ and $R_\I$ (without the superscript $f$), respectively, denote the corresponding $\Lambda^n V$-valued currents.

Let us make an observation that will be of further use. If the section $s$ can be written as $\mu s'$ for some smooth function $\mu$ we have the following homogeneity:
\begin{equation}\label{homogen}
s\wedge (\dbar s)^{k-1}=\mu^k s'\wedge (\dbar s')^{k-1},
\end{equation}
that holds since $s$ is of odd degree.

Given a holomorphic function $g$ we will use the notation $\dbar [1/g]$ for the value at $\lambda=0$ of $\dbar |g|^{2\lambda}/g$ and analogously by $[1/g]$ we will mean $|g|^{2\lambda}/g|_{\lambda=0}$, that is, the principal value of $1/g$. 
We will use the fact that 
\begin{equation}\label{envar}
v^\lambda |\sigma|^{2\lambda}\frac{1}{\sigma^a}\bigg |_{\lambda=0} = \left [ \frac{1}{\sigma^a} \right ]
\quad \text{ and } \quad 
\dbar(v^\lambda |\sigma|^{2\lambda})\frac{1}{\sigma^a}\bigg |_{\lambda=0} = \dbar \left [ \frac{1}{\sigma^a} \right ],
\end{equation}
if $v=v(\sigma)$ is a strictly positive smooth function; compare to ~\cite[Lemma~2.1]{A}.

\subsection{Restrictions of currents and the Standard Extension Property}\label{pseudo}
In ~\cite{AW2} the class of \emph{pseudomeromorphic} currents is introduced. The definition is modeled on the residue currents that appear in various works such as ~\cite{A} and ~\cite{PTY}; a current is pseudomeromorphic if it can be written as a locally finite sum of push-forwards under holomorphic modifications of currents of the simple form 
\[[1/(\sigma_{q+1}^{a_{q+1}}\cdots \sigma_{n}^{a_{n}})]\dbar[1/\sigma_1^{a_1}]\wedge\cdots\wedge \dbar[1/\sigma_q^{a_q}]
\wedge \alpha, \]
where $\sigma_j$ are some local coordinates and $\alpha$ is a smooth form. In particular, all currents that appear in this paper are pseudomeromorphic. 

An important property of pseudomeromorphic currents is that they can be restricted to varieties and, more generally, constructible sets. More precisely, they allow for multiplication by characteristic functions of constructible sets so that ordinary calculus rules holds. In particular, 
\begin{equation}\label{mjau}
\1_V (\beta \wedge T)= \beta \wedge (\1_V T),
\end{equation}
if $\beta$ is a smooth form. 
Moreover, suppose that $S$ is a pseudomeromorphic current on a manifold $Y$, that $\pi:Y\to X$ is a holomorphic modification, and that $A\subseteq Y$ is a constructible set. Then
\begin{equation}\label{joy}
\1_A(\pi_*S)=\pi_*(\1_{\pi^{-1}(A)}S).
\end{equation}

A current $T$ with support on an analytic variety $V$ (of pure dimension) 
is said to have the so-called \emph{Standard Extension Property (SEP) with respect to $V$} if it is equal to its standard extension in the sense of ~\cite{Bj}; this basically means that it has no mass concentrated to sub-varieties of $V$. 
If $T$ is pseudomeromorphic, $T$ has the SEP with respect to $V$ if and only if 
$\1_WT=0$ for all subvarieties $W\subset V$ of smaller dimension than $V$, see ~\cite{A6}.  
We will use that the current $\dbar[1/\sigma_i^a]$ has the SEP with respect to $\{\sigma_i=0\}$; in particular, $\dbar[1/\sigma_i^a]\1_{\{\sigma_j=0\}}=0$. 
If $S$ and $\pi$ are as above and we moreover assume that $S$ has the SEP with respect to an analytic variety $W$, then $\pi_* S$ has the SEP with respect to $\pi^{-1}(W)$.

\section{Rees valuations}\label{reesprelim}
%
%
\subsection{The normalized blowup and Rees valuations}
We will work in a local situation. Let $\O_0^n$ denote the local ring of germs of holomorphic functions at $0\in\C^n$, and let $\m$ denote its maximal ideal. Recall that an ideal $\a\subset\O_0^n$ is \emph{$\m$-primary} if its associated zero locus $V(\fa)$ is equal to the origin.

Let $\fa\subset\O_0^n$ be an $\fm$-primary ideal. 
The \emph{Rees valuations} of $\fa$ are defined
in terms of the normalized blowup $\pi_0:X_0\to(\C^n,0)$
of $\fa$. 
Since $\fa$ is $\fm$-primary, 
$\pi_0$
is an isomorphism outside $0\in\C^n$ and 
$\pi_0^{-1}(0)$ is the union of finitely
many prime divisors $E\subset X_0$. The Rees valuations  
of $\fa$ are then the associated (divisorial) valuations
$\ord_E$ on $\cO_0^n$:
$\ord_E(g)$ is the order of vanishing of $g$
along $E$.

The blowup of an ideal
is defined quite generally 
in~\cite[Ch.II, \S7]{Ha}.
We shall make use of the following
more concrete description, see ~\cite[p.~332]{T}.
Let $f_1,\dots,f_m$ be generators of $\fa$ 
and consider the rational map
$\psi:(\C^n,0)\dashrightarrow\P^{m-1}$ given by
$\psi=[f_1:\dots:f_m]$. 
Then $X_0$ is the normalization of the closure of the graph 
of $\psi$, and $\pi_0:X_0\to(\C^n,0)$ is the natural projection.
Denote by $\Psi_0:X_0\to\P^{m-1}$ the other projection.
It is a holomorphic map. The image under $\Psi_0$
of any prime divisor $E\subset\pi_0^{-1}(0)$ has dimension $n-1$.

%
%
\subsection{Log resolutions}\label{logres}
The normalized
blowup can be quite singular, making it difficult to use for
analysis. Therefore, we shall use a 
\emph{log-resolution} of $\fa$, 
see ~\cite[Definition~9.1.12]{L2}.
\begin{df}\label{loglog}
  A \emph{log-resolution} of $\fa$ is a holomorphic modification  
  $\pi:X\to(\C^n,0)$, where $X$ is a complex manifold, such that
  \begin{itemize}
  \item
    $\pi$ is an isomorphism above $\C^n\setminus\{0\}$:
  \item
    $\fa\cdot\cO_X=\cO_X(-Z)$, where $Z=Z(\fa)$ 
    is an effective divisor on $X$
    with simple normal crossings support.
  \end{itemize}
\end{df}
The simple normal crossings condition means that 
the exceptional divisor $\pi^{-1}(0)$ is a
union of finitely many prime divisors $E_1,\dots,E_N$,
called \emph{exceptional primes},
and at any point $x\in\pi^{-1}(0)$ we can pick local coordinates
$(\sigma_1,\dots,\sigma_n)$ at $x$ such that 
$\pi^{-1}(0)=\{\sigma_1\cdot\dots\cdot\sigma_p=0\}$
and for each exceptional prime $E$, either 
$x\not\in E$, or $E=\{\sigma_i=0\}$ 
for some $i\in\{1,\dots p\}$.

If we write $Z=\sum_{j=1}^Na_jE_j$, then the
condition $\fa\cdot\cO_X=\cO_X(-Z)$
means that (the pullback to $X$ of)
any holomorphic germ $g\in\a$ vanishes to order 
at least $a_j$ along each $E_j$. 
Moreover, in the notation above, if $x\in\pi^{-1}(0)$
and $E_{j_k}=\{\sigma_k=0\}$, $1\le k\le p$ are the 
exceptional primes containing $x$, then there exists
$g\in\fa$ such that $g=\sigma_1^{a_1}\dots \sigma_p^{a_p}u$, where 
$u$ is a unit in $\cO_{X,x}$, that is, $u(x)\neq 0$. 

The existence of a log-resolution is a consequence of 
Hironaka's theorem on resolution of singularities.
Indeed, the ideal $\fa$ is already principal on the 
normalized blowup $X_0$, 
so it suffices to pick $X$ as a desingularization of $X_0$. 
This gives rise to a commutative diagram
\begin{equation*}
  \xymatrix{
    &&
    X
    \ar[d]_{\varpi}
    \ar[ddll]_{\pi}
    \ar[ddrr]^{\Psi}
    &&
    \\
    &&
    X_0
    \ar[dll]_(.3){\pi_0}
    \ar[drr]^(.3){\Psi_0}
    &&
    \\
    (\C^n,0)
    \ar@{-->}[rrrr]^{\psi}
    &&&&
    \P^{m-1}
  }
\end{equation*}
Here $\Psi:X\to\P^{m-1}$ is holomorphic.

Every exceptional prime $E$ of a log resolution 
$\pi:X\to(\C^n,0)$ of
$\fa$ defines a divisorial valuation $\ord_E$, but not all
of these are Rees valuations of $\fa$. 
If $\ord_E$ is a Rees valuation, we call $E$ a
\emph{Rees divisor}.
From the diagram above we see:
\begin{lma}
  An exceptional prime $E$ of $\pi$ is a
  Rees divisor of $\fa$ if and only if its 
  image $\Psi(E)\subset\P^{m-1}$ 
  has dimension $n-1$.
\end{lma}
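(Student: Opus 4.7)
The plan is to exploit the commutative diagram with $\Psi=\Psi_0\circ\varpi$ and $\pi=\pi_0\circ\varpi$, together with two facts: the definition of Rees valuations via the normalized blowup $\pi_0:X_0\to(\C^n,0)$, and the already stated property that $\Psi_0$ sends every exceptional prime of $\pi_0$ onto a subvariety of dimension $n-1$ in $\P^{m-1}$.

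First I would analyze the image $\varpi(E)\subset X_0$ for an exceptional prime $E$ of $\pi$. Since $\pi=\pi_0\circ\varpi$, this image is contained in $\pi_0^{-1}(0)$, and being the image of an irreducible variety it is an irreducible closed subvariety of dimension at most $n-1$. There are thus only two possibilities: either $\dim\varpi(E)=n-1$, in which case $\varpi(E)$ must coincide with one of the exceptional primes $E_0$ of $\pi_0$ and $\varpi\colon E\to E_0$ is generically finite; or $\dim\varpi(E)<n-1$, in which case $\varpi$ contracts $E$ to a subvariety of $\pi_0^{-1}(0)$ of lower dimension.

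Next I would relate this dichotomy to the Rees property. Since $\varpi$ is a birational modification and $X_0$ is normal, divisorial valuations on the function field are preserved under $\varpi$ in the sense that $\ord_E=\ord_{\varpi(E)}$ precisely when $\varpi(E)$ is a prime divisor; in the contracting case no Rees valuation $\ord_{E_0}$ can equal $\ord_E$, since any such equality would force $\varpi$ to map $E$ birationally onto $E_0$. Hence $E$ is a Rees divisor if and only if $\dim\varpi(E)=n-1$.

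Finally I would translate this into a statement about $\Psi(E)$. In the first case $\Psi(E)=\Psi_0(\varpi(E))=\Psi_0(E_0)$ has dimension $n-1$ by the cited property of $\Psi_0$. In the second case $\Psi(E)=\Psi_0(\varpi(E))$ has dimension at most $\dim\varpi(E)<n-1$. Combining these gives $E$ Rees $\iff\dim\Psi(E)=n-1$, as desired. The main technical point, which I expect to be the only real obstacle, is the valuation-theoretic step identifying Rees divisors of $X$ with those exceptional primes that are not $\varpi$-contracted; this is a standard consequence of the normality of $X_0$ and the uniqueness of divisorial valuations on the function field of $\C^n$, and can likely be invoked without a long argument.
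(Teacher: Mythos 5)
Your proposal is correct and fills in exactly the argument the paper leaves implicit behind the phrase ``From the diagram above we see.'' You correctly reduce the question to whether $\varpi(E)$ is an exceptional prime $E_0$ of the normalized blowup $\pi_0$, use the normality of $X_0$ to conclude $\ord_E=\ord_{E_0}$ in that case (since a proper birational morphism to a normal variety is an isomorphism over the complement of a codimension-two subset, hence at the generic point of $E_0$), observe that the center of $\ord_E$ on $X_0$ is $\varpi(E)$, so that $\ord_E$ can only be a Rees valuation when $\dim\varpi(E)=n-1$, and then push through $\Psi=\Psi_0\circ\varpi$ using the stated fact that $\Psi_0$ carries each exceptional prime of $\pi_0$ onto an $(n-1)$-dimensional image. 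One small phrasing quibble: you write ``$\ord_E=\ord_{\varpi(E)}$ precisely when $\varpi(E)$ is a prime divisor,'' but $\ord_{\varpi(E)}$ is not defined otherwise; the cleaner statement is the one you in fact use, namely that the center of $\ord_E$ on $X_0$ equals $\varpi(E)$, so $\ord_E$ is one of the $\ord_{E_0}$ if and only if $\varpi(E)$ is a divisor. This is the same route the paper intends; there is no genuinely different approach at stake here.
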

For completeness we give two results, the second of which
will be used in Example~\ref{icke-monom}.
\begin{prop}\label{intrees}
  Let $E$ be an exceptional prime of a log resolution 
  $\pi:X\to(\C^n,0)$ of $\fa$.
  Then the intersection number $((-Z(\fa))^{n-1}\cdot E)$ is
  strictly positive if $E$ is a Rees divisor of $\fa$ 
  and zero otherwise.
\end{prop}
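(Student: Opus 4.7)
The plan is to realize $-Z=-Z(\fa)$ as the pullback of the hyperplane class on $\P^{m-1}$ under $\Psi:X\to\P^{m-1}$, and then apply the projection formula. Since $\fa\cdot\cO_X=\cO_X(-Z)$ is globally generated by the pullbacks $f_1\circ\pi,\dots,f_m\circ\pi$, and these are by construction the sections defining the morphism $\Psi$, one obtains a canonical isomorphism
\begin{equation*}
\cO_X(-Z)\simeq\Psi^*\cO_{\P^{m-1}}(1).
\end{equation*}
Writing $H$ for the hyperplane class on $\P^{m-1}$, this translates into the identity $-Z=\Psi^*H$ in the Chow (or cohomology) ring of $X$.

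Next, I would invoke the projection formula for the proper map $\Psi$, viewing $E$ as an $(n-1)$-cycle on $X$:
\begin{equation*}
\bigl((-Z)^{n-1}\cdot E\bigr)_X = \bigl((\Psi^*H)^{n-1}\cdot E\bigr)_X = \bigl(H^{n-1}\cdot \Psi_*E\bigr)_{\P^{m-1}}.
\end{equation*}
Since $E$ is irreducible of dimension $n-1$, the pushforward $\Psi_*E$ equals $\deg(\Psi|_E)\cdot[\Psi(E)]$ when $\Psi|_E:E\to\Psi(E)$ is generically finite, and is the zero cycle otherwise.

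I now split according to the criterion in the lemma immediately preceding the statement. If $E$ is not a Rees divisor then $\dim\Psi(E)<n-1$, so $\Psi|_E$ has positive-dimensional generic fibre, $\Psi_*E=0$, and the intersection number vanishes. If $E$ is a Rees divisor then $\dim\Psi(E)=n-1$, so $\Psi|_E$ is generically finite of some positive degree $d$, $\Psi(E)\subset\P^{m-1}$ is an $(n-1)$-dimensional subvariety of positive projective degree $\bigl(H^{n-1}\cdot[\Psi(E)]\bigr)$, and
\begin{equation*}
\bigl((-Z)^{n-1}\cdot E\bigr) = d\cdot\deg_{\P^{m-1}}\Psi(E) > 0.
\end{equation*}

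The only substantive step is the identification $\cO_X(-Z)\simeq\Psi^*\cO_{\P^{m-1}}(1)$: one must match the order-of-vanishing data $Z=\sum_j a_jE_j$ (where $a_j=\min_i\ord_{E_j}(f_i)$) with the description of $\Psi$ via the sections $f_i\circ\pi$ of $\cO_X(-Z)$. Once that compatibility is in place the rest of the argument is a purely formal projection-formula computation.
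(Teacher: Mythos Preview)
Your argument is correct and rests on the same core idea as the paper's proof---the projection formula together with ampleness of the pulled-back bundle---but the execution differs. The paper factors through the normalized blowup $\varpi:X\to X_0$: there $\fa\cdot\cO_{X_0}=\cO_{X_0}(-Z_0)$ with $-Z_0$ ample, $Z=\varpi^*Z_0$, and the projection formula gives $((-Z)^{n-1}\cdot E)=((-Z_0)^{n-1}\cdot\varpi_*E)$, which is positive exactly when $\varpi_*E\ne 0$, i.e.\ by definition when $E$ is a Rees divisor. You instead bypass $X_0$ and project directly to $\P^{m-1}$ via $\Psi$, using the preceding lemma (Rees $\Leftrightarrow$ $\dim\Psi(E)=n-1$) in place of the definition. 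Your route makes the positivity completely explicit as a projective degree; the paper's route is marginally cleaner in that it appeals to the definition of Rees divisor directly rather than to a derived criterion.

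One small point of phrasing: $\Psi:X\to\P^{m-1}$ is not itself proper, since $X$ is only a neighborhood of $\pi^{-1}(0)$. What you actually need---and what holds---is that $E\subset\pi^{-1}(0)$ is compact, so $\Psi|_E$ is proper and the projection formula applies to it with $c_1(\cO_X(-Z))|_E=(\Psi|_E)^*H$. This does not affect the substance of your argument.
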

\begin{proof}
  On the normalized blowup $X_0$, we may write 
  $\fa\cdot\cO_{X_0}=\cO_{X_0}(-Z_0)$,
  where $-Z_0$ is an \emph{ample} divisor.
  Then $\fa\cdot\cO_X=\cO_X(-Z)$, where
  $Z=\varpi^*Z_0$. It follows that 
  $((-Z^{n-1})\cdot E)=((-Z_0^{n-1})\cdot\varpi_*E)$.
  The result follows since $-Z_0$ is ample and 
  since $E$ is a Rees divisor if and only if
  $\varpi_*(E)\ne0$.
\end{proof}
\begin{cor}\label{dimension2}
  In dimension $n=2$, the Rees valuations of a product
  $\fa=\fa_1\cdot\dots\cdot\fa_k$ of $\fm$-primary 
  ideals is the union of the Rees valuations of the $\fa_i$.
\end{cor}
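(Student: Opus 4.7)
The plan is to work on a single model that log-resolves $\fa$ and every $\fa_i$ simultaneously, and then deduce the corollary from the intersection-theoretic criterion of Proposition~\ref{intrees}. First I would take a common log-resolution $\pi:X\to(\C^2,0)$ of all the ideals $\fa_1,\dots,\fa_k$ and of $\fa=\fa_1\cdots\fa_k$. Such an $X$ exists by taking any log-resolution of each $\fa_i$ and dominating them; one can then shrink to a single smooth model on which $\fa_i\cdot\cO_X=\cO_X(-Z_i)$ for each $i$, with the union of all supports SNC. Since pullback of ideals multiplies, on $X$ we automatically get $\fa\cdot\cO_X=\cO_X(-Z)$ with $Z=Z_1+\cdots+Z_k$, and this $Z$ has SNC support, so $\pi$ is a log-resolution of $\fa$ as well.

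The key step is the inequality $(-Z_i\cdot E)\ge 0$ for every exceptional prime $E$ of $\pi$, with equality precisely when $E$ is not a Rees divisor of $\fa_i$. To see this, factor $\pi$ through the normalized blowup $\pi_{0,i}:X_{0,i}\to(\C^2,0)$ of $\fa_i$ via a morphism $\varpi_i:X\to X_{0,i}$, and write $Z_i=\varpi_i^*Z_{0,i}$, where $-Z_{0,i}$ is ample on $X_{0,i}$. The projection formula then gives
\begin{equation*}
(-Z_i\cdot E)=(-Z_{0,i}\cdot\varpi_{i,*}E).
\end{equation*}
If $\varpi_i$ contracts $E$ to a point, the right-hand side vanishes; otherwise $\varpi_{i,*}E$ is a positive multiple of an irreducible curve on the normal surface $X_{0,i}$ and the ampleness of $-Z_{0,i}$ forces strict positivity. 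By the lemma preceding Proposition~\ref{intrees}, $E$ is contracted by $\varpi_i$ exactly when it is not a Rees divisor of $\fa_i$.

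Now the corollary drops out by linearity. Since $n=2$, Proposition~\ref{intrees} says $E$ is a Rees divisor of $\fa$ iff $(-Z\cdot E)>0$. Expanding,
\begin{equation*}
(-Z\cdot E)=\sum_{i=1}^k(-Z_i\cdot E),
\end{equation*}
a sum of non-negative terms by the previous step; it is positive iff at least one summand is positive, i.e.\ iff $E$ is a Rees divisor of some $\fa_i$. This identifies the Rees divisors of $\fa$ on $X$ with the union of the Rees divisors of the $\fa_i$, which is the claim at the level of valuations.

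The main obstacle, and what forces the restriction to $n=2$, is precisely this last linearity step: in higher dimensions one must compute $\bigl((-Z)^{n-1}\cdot E\bigr)$, and the binomial expansion of $(-Z_1-\cdots-Z_k)^{n-1}$ produces mixed terms that need not be individually non-negative, so the clean additive decomposition breaks down. Everything else in the argument (existence of the common log-resolution, non-negativity of $(-Z_i\cdot E)$ via pullback from the normalized blowup) goes through in any dimension; only the dimension-one intersection pairing makes the argument conclusive here.
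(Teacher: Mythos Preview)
Your proof is correct and follows essentially the same approach as the paper: take a common log-resolution, write $Z=\sum_i Z_i$, use the nonnegativity of $(-Z_i\cdot E)$ with strictness exactly at Rees divisors of $\fa_i$, and conclude by linearity of the intersection pairing in dimension two. The only cosmetic difference is that the paper cites Proposition~\ref{intrees} directly for each $\fa_i$ (since $X$ is a log-resolution of each $\fa_i$), whereas you re-derive that inequality via the projection formula and ampleness of $-Z_{0,i}$---which is just the proof of Proposition~\ref{intrees} unpacked.
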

\begin{proof}
  Pick a common log-resolution $\pi:X\to(\C^n,0)$ of
  all the $\fa_i$. 
  Then $\fa_i\cdot\cO_X=\O_X(-Z_i)$
  and $\fa\cdot\cO_X=\O_X(-Z)$, 
  where $Z=\sum_iZ_i$. 
  Fix an exceptional prime $E$. 
  By Proposition~\ref{intrees} we have 
  $(Z_i\cdot E)\le 0$ with strict inequality if and only
  if $E$ is a Rees divisor of $\fa_i$.
  Thus $(Z\cdot E)=\sum_i(Z_i\cdot E)\le0$ 
  with strict inequality if and only $E$ is a
  Rees divisor of some $\fa_i$.
  The result now follows from Proposition~\ref{intrees}.
\end{proof}
%
%
\subsection{Essential multi-indices}\label{essential}
In our situation, we are given an $\fm$-primary
ideal $\fa$ as well as a fixed set of generators
$f_1,\dots,f_m$ of $\fa$. 

Consider a multi-index 
$\cI=\{i_1,\dots,i_n\}\subseteq\{1,\dots,m\}$.
Let $\pi_\cI:\P^{m-1}\setminus W_\cI\to\P^{n-1}$,
where $W_\cI:=\{w_{i_1}=\dots=w_{i_n}=0\}\subset\P^{m-1}$,
be the projection given by 
$[w_1:\dots:w_m]\to[w_{i_1}:\dots:w_{i_n}]$.
Define $\Psi_\cI:X\dashrightarrow\P^{n-1}$ by
$\Psi_{\cI}:=\pi_{\cI}\circ\Psi$.
\begin{df}
  Let $E\subset X$ be an exceptional prime.
  We say that $\cI$ is \emph{$E$-essential} or that $\I$ is \emph{essential with respect to $E$} if 
  $\Psi(E)\not\subset W_\cI$ and 
  if $\Psi_{\cI}|_E:E\dashrightarrow\P^{n-1}$ is
  dominant, that is, $\Psi_\cI(E)$ is not contained in 
  a hypersurface. We say that $\I$ is \emph{essential} if it is essential with respect to at least one exceptional prime.
\end{df}

If $\cI$ is $E$-essential, then $E$
must be a Rees divisor of $\fa$, so, in fact, $\I$ is essential if it is essential with respect to at least one Rees divisor. 
Conversely, if $E$ is 
Rees divisor of 
$\fa$, then there exists at least one 
$E$-essential multi-index ~ $\cI$. Observe, however, that $\I$ can be essential with respect to more than one $E$, and conversely that there can be several $E$-essential multi-indices; compare to the discussion at the end of Section ~\ref{monomialcase} and the examples in Section ~\ref{teflon}.

Consider an exceptional prime $E$ of $\pi$ and 
a point $x\in E$ not lying on any other exceptional prime.
Pick local coordinates $(\sigma_1,\dots,\sigma_n)$ at $x$ such that
$E=\{\sigma_1=0\}$. We can write $f_i=\sigma_1^af'_i$, for 
$1\le i\le m$, where 
$a=\ord_E(\fa)$ and $f'_i\in\cO_{X,x}$. The holomorphic
functions $f'_i$ can be viewed as local sections of 
the line bundle $\cO_X(-Z)$ and there exists at least 
one $i$ such that $f'_i(x)\ne0$.
\begin{lma}\label{bralemma}
  A multi-index $\cI=\{i_1,\dots,i_n\}$ is $E$-essential if and only if 
the form 
\begin{equation}\label{rosa}
\sum_{k=1}^n (-1)^{k-1} f'_{i_k} df'_{i_1}\wedge\dots\wedge
    \widehat{df'_{i_k}}\wedge\dots\wedge df'_{i_n} 
\end{equation}
is generically nonvanishing on $E$. 
\end{lma}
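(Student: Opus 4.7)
The claim is local on $E$, and we work near a chosen point $x\in E$ lying on no other exceptional prime, in the coordinates $(\sigma_1,\dots,\sigma_n)$ with $E=\{\sigma_1=0\}$ and $f_i=\sigma_1^a f'_i$. Factoring out the common $\sigma_1^a$, the holomorphic map $\Psi:X\to\P^{m-1}$ is represented near $x$ by $[f'_1:\cdots:f'_m]$, so $\Psi_\cI|_E$ is the rational map $[f'_{i_1}|_E:\cdots:f'_{i_n}|_E]$, defined at points of $E$ where at least one $f'_{i_k}$ is nonzero. Note that $\dim E=n-1=\dim\P^{n-1}$, so dominance of $\Psi_\cI|_E$ is equivalent to the pullback of a top-degree form on $\P^{n-1}$ being generically nonzero.

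If $\Psi(E)\subset W_\cI$, then $f'_{i_k}|_E\equiv 0$ for every $k\in\{1,\dots,n\}$, so $\cI$ is not $E$-essential. In this case every term of
\begin{equation*}
\omega:=\sum_{k=1}^n (-1)^{k-1} f'_{i_k}\,df'_{i_1}\wedge\cdots\wedge\widehat{df'_{i_k}}\wedge\cdots\wedge df'_{i_n}
\end{equation*}
carries a factor $f'_{i_k}$ vanishing on $E$, so $\omega|_E\equiv 0$ and the equivalence holds trivially.

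In the remaining case pick $j$ with $f'_{i_j}|_E\not\equiv 0$. On the open subset $U\subset E$ where $f'_{i_j}\ne 0$, the map $\Psi_\cI|_E$ factors through the affine chart $\{w_j\ne 0\}\cong\C^{n-1}$ of $\P^{n-1}$ via the coordinates $\zeta_k:=f'_{i_k}/f'_{i_j}$, $k\ne j$, so dominance is equivalent to the pullback of the volume form $\bigwedge_{k\ne j} dw_k$ being not identically zero on $U$. Using $d\zeta_k=(f'_{i_j}\,df'_{i_k}-f'_{i_k}\,df'_{i_j})/(f'_{i_j})^2$ and expanding the wedge, all contributions involving two factors of $df'_{i_j}$ vanish; collecting the surviving terms and tracking signs yields the identity
\begin{equation*}
\bigwedge_{k\ne j} d\zeta_k = \frac{(-1)^{j-1}}{(f'_{i_j})^n}\,\omega
\end{equation*}
on $X$ near $x$. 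Since $f'_{i_j}$ has no zeros on $U$, this shows that $\Psi_\cI|_E$ is dominant if and only if $\omega|_E$ is generically nonvanishing, proving the lemma. The only non-routine ingredient is the sign bookkeeping in the wedge expansion, which the alternating signs $(-1)^{k-1}$ in $\omega$ are designed precisely to absorb.
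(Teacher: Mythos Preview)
Your proof is correct and follows essentially the same route as the paper: pass to an affine chart of $\P^{n-1}$ where some $f'_{i_j}$ is a unit, and observe that the wedge of the differentials of the affine coordinates $f'_{i_k}/f'_{i_j}$ equals a nowhere-vanishing function times the form \eqref{rosa}. You are slightly more explicit than the paper in writing down the identity $\bigwedge_{k\ne j} d\zeta_k = (-1)^{j-1}(f'_{i_j})^{-n}\,\omega$ and in separately disposing of the degenerate case $\Psi(E)\subset W_\cI$, but there is no substantive difference in strategy.
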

\begin{remark}\label{regn}
Observe in particular that
\begin{equation}\label{regna} 
\ord_E(f_{i_1})=\ldots=\ord_E(f_{i_n})=\ord_E(\fa)
\end{equation}
 if $\I$ is $E$-essential.

\end{remark}

\begin{proof}
Locally on $E$ (where $f_j'\neq 0$) we have that 
\[
\Psi_\I=
\left [\frac{f_1'}{f_j'}: \ldots : \frac{f_{j-1}'}{f_j'}:\frac{f_{j+1}'}{f_j'}:
\ldots \frac{f_n'}{f_j'}
\right ].
\]
Note that $\Psi_\I$ is dominant if (and only if) $\jac (\Psi_\I)$ is generically nonvanishing, or equivalently the holomorphic form 
\begin{equation}\label{twin}
\partial \left ( \frac{f_1'}{f_j'} \right )\wedge
\ldots\wedge
\partial \left ( \frac{f_{j-1}'}{f_j'} \right )\wedge
\partial \left ( \frac{f_{j+1}'}{f_j'} \right )\wedge
\ldots\wedge
\partial \left ( \frac{f_n'}{f_j'} \right )
\end{equation}
is generically nonvanishing. But \eqref{twin} is just a nonvanishing function times \eqref{rosa}. 
\end{proof}
%
%

 \section{Proof of Theorem B}\label{coeff}
Throughout this section let $\a$ denote the ideal $(f)$. 
Let us first prove that $R^f_\I\not\equiv 0$ implies that $\I$ is essential.
Let $\pi: X\to (\C^n,0)$ be a log-resolution of $\a$. 
By standard arguments, see ~\cite{PTY}, ~\cite{A} etc., the analytic continuation to $\lambda=0$ of 
\begin{equation}\label{uppe}
\pi^*(\dbar|f|^{2\lambda}\wedge u)
\end{equation}
exists and defines a globally defined current on $X$, whose push-forward by $\pi$ is equal to $R$; we denote this current by $\widetilde R$, so that $R=\pi_*\widetilde R$. Indeed, provided that the analytic continuation of \eqref{uppe} exists, we get by the uniqueness of analytic continuation
\begin{multline}\label{muck}
\pi_* \widetilde R\cdot\Phi=
\pi_* (\pi^* (\dbar |f|^{2\lambda}\wedge u))\cdot\Phi|_{\lambda=0}=\\
\pi^* (\dbar |f|^{2\lambda}\wedge u)\cdot\pi^*\Phi|_{\lambda=0}=
\dbar |f|^{2\lambda}\wedge u\cdot \Phi|_{\lambda=0}= R\cdot\Phi.
\end{multline}

In the same way we define currents 
\begin{equation*}
\widetilde R_\I=\pi^*(\dbar|f|^{2\lambda} \wedge u_\I)|_{\lambda=0},
\end{equation*}
where 
\begin{equation*}
u_\I=\frac{s_\I\wedge (\dbar s_\I)^{n-1}}{|f|^{2n}}.
\end{equation*}
Let $E$ be an exceptional prime and let us fix a chart $\U$ in $X$ such that $\U\cap E \neq \emptyset$ and local coordinates $\sigma$ so that the pull-back of $f$ is of the form $\pi^* f=\mu f'$, where $\mu$ is a monomial, $\mu=\sigma_1^{a_1}\cdots \sigma_n^{a_n}$ and $f'$ is nonvanishing, and moreover $E=\{\sigma_1=0\}$, see Section ~\ref{logres}. Then $\pi^* s_\I=\overline \mu s'_\I$ for some nonvanishing section $s'_\I$ and $\pi^*|f|^2=|\mu|^2\nu$, where $\nu=|s'|^2$ is nonvanishing. 
Hence, using \eqref{homogen} 
\begin{equation*}
\widetilde R_\I=
\dbar(|\mu|^{2\lambda}\nu^\lambda)\frac {s'_\I\wedge (\dbar s'_\I)^{n-1}}{\mu^{n}\nu^n}\Big |_{\lambda=0}
\end{equation*}
which by \eqref{envar} is equal to
\[
\sum_{i=1}^n
\left[\frac{1}{\sigma_1^{na_1}\cdots
\sigma_{i-1}^{na_{i-1}}\sigma_{i+1}^{na_{i+1}} \cdots \sigma_n^{na_n}}\right ]\dbar\left[\frac{1}{\sigma_i^{na_i}}\right ]
\wedge\frac {s'_\I\wedge (\dbar s'_\I)^{n-1}}{\nu^n}.
\]
Thus $\widetilde R$ and $\widetilde R_\I$ are pseudomeromorphic in the sense of ~\cite{AW2} and so it makes sense to take restrictions of them to subvarieties of their support, see Section ~\ref{pseudo}. 

\begin{lma}\label{sing}
Let $E$ be an exceptional prime. The current $\widetilde R_\I\1_E$ vanishes unless $\I$ is essential with respect to $E$. Moreover $\widetilde R_\I\1_E$ only depends on the $f_k$ which satisfy that $\vE(f_k)=\vE(\a)$.
\end{lma}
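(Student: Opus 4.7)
First I would restrict the formula for $\widetilde R_\I$ displayed just before the lemma to $E=\{\sigma_1=0\}$. Since $\dbar[1/\sigma_i^{na_i}]$ has the SEP with respect to $\{\sigma_i=0\}$, it is annihilated by $\1_{\{\sigma_j=0\}}$ whenever $j\ne i$; combining this with \eqref{mjau} shows that multiplication by $\1_E$ kills every summand with $i\ne 1$, reducing matters to
\[
\widetilde R_\I\1_E \;=\; T\wedge\frac{s'_\I\wedge(\dbar s'_\I)^{n-1}}{\nu^n},\qquad T:=\left[\frac{1}{\sigma_2^{na_2}\cdots\sigma_n^{na_n}}\right]\dbar\left[\frac{1}{\sigma_1^{na_1}}\right].
\]
The engine of the argument is that $T$ is annihilated both by exterior multiplication by $d\bar\sigma_1$ (immediate: $\dbar[1/\sigma_1^{na_1}]$ is proportional to $d\bar\sigma_1$) and by multiplication by $\bar\sigma_1$. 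The latter I would verify via analytic continuation: $\bar\sigma_1\dbar(|\sigma_1|^{2\lambda}/\sigma_1^{na_1})=\lambda\sigma_1^{\lambda-na_1}\bar\sigma_1^{\lambda}d\bar\sigma_1$, and the pairing of this with any test form is analytic in $\lambda$ near $0$ (by a polar-coordinate/Fourier-mode argument), so the prefactor $\lambda$ forces the value at $\lambda=0$ to vanish. Consequently $T$ annihilates every smooth form lying in the ideal $\J$ generated by $\bar\sigma_1$ and $d\bar\sigma_1$.

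For the second assertion, decompose $\I=\I'\sqcup\I''$ with $\I':=\{i_j\in\I:\vE(f_{i_j})=a_1\}$, and write $s'_\I=s''_\I+s'''_\I$ correspondingly. For each $i_j\in\I''$ we have $f'_{i_j}=\sigma_1^{c_j}h_{i_j}$ with $c_j\ge 1$, so $s'''_\I$ is divisible by $\bar\sigma_1$, and
\[
\overline{df'_{i_j}}=c_j\bar\sigma_1^{c_j-1}\bar h_{i_j}\,d\bar\sigma_1+\bar\sigma_1^{c_j}\overline{dh_{i_j}}\;\in\;\J
\]
(for $c_j=1$ the two summands lie in the $d\bar\sigma_1$- respectively $\bar\sigma_1$-parts of $\J$, while for $c_j\ge 2$ both are divisible by $\bar\sigma_1$), whence $\dbar s'''_\I\in\J$ as well. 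Expanding $s'_\I\wedge(\dbar s'_\I)^{n-1}$ by multilinearity, every monomial other than $s''_\I\wedge(\dbar s''_\I)^{n-1}$ contains a factor of $s'''_\I$ or $\dbar s'''_\I$ and therefore lies in $\J$. A similar argument handles the denominator: setting $\nu_0:=\sum_{k:\vE(f_k)=a_1}|f'_k|^2$, the difference $\nu-\nu_0$ is divisible by $|\sigma_1|^2$, so the Taylor correction $1/\nu^n-1/\nu_0^n$ lies in $\J$. Since $T$ kills $\J$, we conclude that
\[
\widetilde R_\I\1_E \;=\; T\wedge\frac{s''_\I\wedge(\dbar s''_\I)^{n-1}}{\nu_0^n},
\]
an expression involving only the $f_k$ with $\vE(f_k)=\vE(\a)$.

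For the first assertion, assume $\I$ is not $E$-essential. If $|\I'|<n$ then $s''_\I\wedge(\dbar s''_\I)^{n-1}$ uses fewer than $n$ distinct basis elements $e_{i_j}$ and vanishes in $\Lambda^n V$. Otherwise $\I'=\I$, and Lemma \ref{bralemma} asserts that the holomorphic $(n-1,0)$-form $\omega$ of \eqref{rosa} pulls back to zero along $E\hookrightarrow\U$; holomorphicity then forces a decomposition $\omega=\sigma_1\alpha+d\sigma_1\wedge\gamma$ with holomorphic $\alpha,\gamma$, whence $\bar\omega=\bar\sigma_1\bar\alpha+d\bar\sigma_1\wedge\bar\gamma\in\J$. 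Since $s''_\I\wedge(\dbar s''_\I)^{n-1}$ is a constant multiple of $\bar\omega\cdot e_\I$, $T$ annihilates it and $\widetilde R_\I\1_E=0$. The main obstacle I anticipate is the clean verification of $\bar\sigma_1\cdot T=0$ via analytic continuation, together with the bookkeeping of which monomials in the multilinear expansion of $s'_\I\wedge(\dbar s'_\I)^{n-1}$ lie in $\J$; once these are in place, the remaining combinatorics is routine.
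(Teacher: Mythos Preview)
Your proof is correct and follows essentially the same approach as the paper's: both reduce to the single term \eqref{star} via the SEP and then argue that only the restriction to $E$ of the antiholomorphic smooth factor $s'_\I\wedge(\dbar s'_\I)^{n-1}/\nu^n$ matters. The paper encodes this restriction tersely by writing ``$\1_E$'' next to the smooth factor, whereas you make the underlying mechanism explicit---that $T$ is annihilated by the ideal $\J=(\bar\sigma_1,d\bar\sigma_1)$---which is exactly why the paper's shorthand is legitimate.
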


\begin{proof}
Recall (from Section ~\ref{pseudo}) that $\dbar[1/\sigma_i^{a}]$ has the standard extension property with respect to $E=\{\sigma_i=0\}$. 
Thus
\begin{equation}\label{star}
\widetilde R_\I\1_E=
\left[\frac{1}{\sigma_2^{na_2}\cdots \sigma_n^{na_n}}\right ]\dbar\left[\frac{1}{\sigma_1^{na_1}}\right ]
\wedge\frac {s'_\I\wedge (\dbar s'_\I)^{n-1}}{\nu^n} \1_E.
\end{equation}
It follows that  $\widetilde R_\I\1_E$ vanishes unless
\begin{equation*}
s'_\I\wedge (\dbar s'_\I)^{n-1}\1_E \not \equiv 0,
\end{equation*}
which by Lemma ~\ref{bralemma} is equivalent to that $\I$ is $E$-essential. Indeed, note that the coefficient of $f'\wedge (\dbar f')^{n-1}$ is $(n-1)!$ times \eqref{rosa}.

For the second statement, recall that 
$\nu=|s'|^2=\sum |\pi^* \bar f_k/\bar\sigma_1^{a_1}|^2$. Note that 
$\pi^* \bar f_k/\bar\sigma_1^{a_1}\1_E=0$ if and only if $\pi^* \bar f_k/\bar\sigma_1^{a_1}$ is divisible by $\bar \sigma_1$, that is, $\vE(f_k)>\vE(\a)$. Hence 
$\widetilde R_\I\1_E$ only depends on the $f_k$ for which $\vE(f_k)=\vE(\a)$, compare to \eqref{star}.  
\end{proof}

\begin{remark}\label{sepremark}
In light of the above proof, $\widetilde R\1_E$ has the SEP with respect to $E$. This follows since $\widetilde R\1_{E}$ is of the form \eqref{star} and $\dbar[1/\sigma_1^a]$ has the SEP with respect to $E=\{\sigma_1=0\}$, see Section ~\ref{pseudo}. 
\end{remark}

Next, let us prove that $R^f_\I \not\equiv 0$ as soon as $\I$ is essential. 
In order to do this we will use arguments inspired by ~\cite{A2}. 
Throughout this section let $\widetilde M_\I$ denote the current 
$\widetilde R_\I \wedge \pi^* (df_\I/ (2 \pi i))^n/n!$ on $X$. Here $e^*_{i_1}\wedge\cdots\wedge e^*_{i_n}\wedge e_{i_n}\wedge\cdots\wedge e_{i_1}=e^*_\I\wedge e_\I$ should be interpreted as 1 so that in fact $\pi_*(\widetilde M_\I) = 
R^f_\I \wedge df_{i_n}\wedge\cdots\wedge df_{i_1}/(2\pi i)^n$.

\begin{lma}\label{mcurrent}
The $(n,n)$-current $\widetilde M_\I$ is a positive measure on $X$ whose support is precisely the union of exceptional primes $E$ for which $\I$ is $E$-essential.
\end{lma}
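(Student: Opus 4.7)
Working locally on $X$ around a point $x$ of the exceptional divisor in the adapted coordinates $(\sigma_1,\dots,\sigma_n)$ from Section~\ref{logres} (so $\pi^{*}f=\mu f'$ with $\mu=\sigma_1^{a_1}\cdots\sigma_n^{a_n}$, $f'$ nonvanishing, and $a_i>0$ iff $\{\sigma_i=0\}$ is an exceptional prime), the local expression for $\widetilde R_\I$ displayed just before Lemma~\ref{sing} reads
\begin{equation*}
\widetilde R_\I=\sum_{i=1}^n P_i\,\dbar\!\left[\tfrac{1}{\sigma_i^{na_i}}\right]\wedge\frac{s'_\I\wedge(\dbar s'_\I)^{n-1}}{\nu^n},
\end{equation*}
where $P_i$ is the principal-value factor in the $\sigma_j$, $j\neq i$. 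My plan is to pair this against $\pi^{*}(df_\I)^n/(n!(2\pi i)^n)$ in these coordinates and observe that all the singularities collapse into currents of integration on the exceptional primes through $x$.

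The first step is to expand $\pi^{*}(df_{i_1}\wedge\cdots\wedge df_{i_n})$. Using $\pi^{*}df_{i_k}=f'_{i_k}\,d\mu+\mu\,df'_{i_k}$ and the vanishing $d\mu\wedge d\mu=0$ gives
\begin{equation*}
\pi^{*}(df_{i_1}\wedge\cdots\wedge df_{i_n})=\mu^n\,df'_{i_1}\wedge\cdots\wedge df'_{i_n}+\mu^{n-1}d\mu\wedge\omega'_\I,
\end{equation*}
where $\omega'_\I=\sum_k(-1)^{k-1}f'_{i_k}\,df'_{i_1}\wedge\cdots\widehat{df'_{i_k}}\cdots\wedge df'_{i_n}$ is the holomorphic form from Lemma~\ref{bralemma}. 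Wedged into $\widetilde R_\I$, the first summand is killed because $P_i\mu^n=\sigma_i^{na_i}$ and $\sigma_i^{na_i}\dbar[1/\sigma_i^{na_i}]=0$. For the second summand I would rewrite $\mu^{n-1}d\mu=\mu^n\sum_j a_j\,d\sigma_j/\sigma_j$: multiplication by $P_i$ leaves a factor $\sigma_i^{na_i}$ in each $j\neq i$ piece, which is again annihilated by $\dbar[1/\sigma_i^{na_i}]$, while the diagonal $j=i$ piece reduces to $a_i\sigma_i^{na_i-1}d\sigma_i$ and produces $a_i\,\dbar[1/\sigma_i]\wedge d\sigma_i=2\pi i\,a_i\,[\sigma_i=0]$. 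The smooth factor $s'_\I\wedge(\dbar s'_\I)^{n-1}/\nu^n$ equals $(n-1)!\,\overline{\omega'_\I}\,e_\I/\nu^n$ up to sign, and the $e_\I$ contracts with the $e^{*}_\I$ contained in $(df_\I)^n/n!$ to give $\pm 1$.

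Assembling everything leads to an identity of the form
\begin{equation*}
\widetilde M_\I=c_n\sum_{i:\,a_i>0}a_i\,[\sigma_i=0]\wedge\frac{i^{(n-1)^2}\,\omega'_\I\wedge\overline{\omega'_\I}}{\nu^n}
\end{equation*}
for a positive universal constant $c_n$, and both claims follow. Each summand is the positive current of integration on the exceptional prime $E_i=\{\sigma_i=0\}$ wedged with the nonnegative smooth $(n-1,n-1)$-form $i^{(n-1)^2}\omega'_\I\wedge\overline{\omega'_\I}/\nu^n$, so $\widetilde M_\I$ is a positive $(n,n)$-current supported on the union of the exceptional primes through $x$; moreover the $E_i$-term is nontrivial precisely when $\omega'_\I|_{E_i}\not\equiv 0$, which by Lemma~\ref{bralemma} is exactly the condition that $\I$ be $E_i$-essential. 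Patching the local identities gives the global statement. The main obstacle is essentially bookkeeping: the principal-value cancellations must be executed without sign errors, and the factors $(n-1)!$, $n!$, and $(2\pi i)^n$ together with the power $i^{(n-1)^2}$ tracked carefully, so as to certify that the resulting prefactor $c_n$ is genuinely \emph{positive} rather than merely real.
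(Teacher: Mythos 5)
Your argument is correct and is essentially the same as the paper's: both expand $\pi^*(df_\I)^n$ in the resolution coordinates, observe that the $\mu^n$ part of the expansion annihilates the residue and principal value factors, and identify the surviving diagonal term $a_i\sigma_i^{na_i-1}d\sigma_i$ as producing $2\pi i\,a_i[\sigma_i=0]$ paired against a manifestly nonnegative smooth form which, by Lemma~\ref{bralemma}, is generically nonvanishing on $E_i$ exactly when $\I$ is $E_i$-essential. The only cosmetic differences are that the paper first invokes Lemma~\ref{sing} to restrict attention to a single essential $E$ and decomposes $s'_\I\wedge(\dbar s'_\I)^{n-1}$ into a $\widehat{d\bar\sigma_1}$-coefficient $\bar\beta$ and a remainder, whereas you keep all exceptional primes through the chart and work with $\omega'_\I$ directly; the two are related by $\beta=\pm\omega'_\I|_E$, and both lead to the same positive density.
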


\begin{proof}
Note that Lemma ~\ref{sing} implies that the support of $\widetilde M_\I$ is contained in the union of exceptional primes for which $\I$ is $E$-essential. Let $E$ be such a divisor and let us fix a chart $\U$ and local coordinates $\sigma$ as in the proof of Lemma ~\ref{sing}. 
Then $\widetilde R_\I\1_E$ is given by \eqref{star}. 
We can always write $s'_\I \wedge (\dbar s'_\I)^{n-1}$ as
\[
s'_\I \wedge (\dbar s'_\I)^{n-1} =
(\bar\beta 
\widehat {d\bar \sigma_1} + d\bar\sigma_1 \wedge \bar \gamma)
\wedge e_\I,
\]
where $\widehat{d\bar \sigma_1}$ denotes $d\bar \sigma_2\wedge\cdots \wedge d\bar \sigma_n$, $\beta$ is a holomorphic function, and $\gamma$ is a holomorphic form. 
Moreover, since $\I$ is $E$-essential,  $s'_\I \wedge (\dbar s'_\I)^{n-1}|_E = \beta|_E \widehat {d\bar \sigma_1} \wedge e_\I$ is generically nonvanishing by Lemma ~\ref{bralemma} (in particular, $\beta|_E$ is generically nonvanishing). 

Moreover, with $\overline {e_j}$ interpreted as $e_j^*$, we have
\begin{multline*}
\pi^*(df_\I)^n= \pi^*(\partial \bar s_\I)^n = \partial (\bar s_\I \wedge (\partial \bar s_\I)^{n-1})= \\
\partial (\sigma_1^{na_1}\cdots\sigma_n^{na_n}
(\beta  
\widehat{d\sigma_1} 
+ d\sigma_1 \wedge \gamma)
)\wedge e_\I^* 
=\\
na_1 \sigma_1^{na_1-1} 
(\sigma_2^{na_2} \cdots\sigma_n^{na_n} \beta + 
\sigma_1\delta) 
d \sigma
\wedge e_\I^*, 
\end{multline*}
where $\delta$ is some holomorphic function, $d \sigma$ denotes $d\sigma_1\wedge\cdots \wedge d \sigma_n$, and $e_\I^*=e_{i_1}^*\wedge\cdots\wedge e_{i_n}^*$. 

Hence, using \eqref{mjau}, we get
\begin{multline}\label{kol}
\widetilde M_\I\1_E=
\widetilde R_\I\1_E \wedge \left ( \frac {\pi^*(df_\I)}{2 \pi i} \right )_n = 
\\
\frac{1}{n!}
\left[\frac{1}{\sigma_2^{na_2}\cdots \sigma_n^{na_n}}\right ]\dbar\left[\frac{1}{\sigma_1^{na_1}}\right ]
\wedge\frac 
{\beta ~ \widehat {d\bar \sigma_1}}
{|f'|^{2n}} 
\1_E 
\\
\wedge na_1 \sigma_1^{na_1-1}
[\sigma_2^{na_2} \cdots \sigma_n^{na_n} \beta + 
 \sigma_1\delta] 
d \sigma
\wedge e_\I^*\wedge e_\I 
= \\
\frac{na_1}{(2\pi i)^n}
\dbar\left[\frac{1}{\sigma_1}\right ]
\frac{|\beta|^2}{|f'|^{2n}} 
\widehat{d\bar \sigma_1}\wedge d \sigma \1_E.
\end{multline}
The right hand side of \eqref{kol} is just Lebesgue measure on $E$ times a smooth, positive, generically nonvanishing function. Hence $\widetilde M_\I$ is a positive current whose support is precisely the union of exceptional primes $E$ for which $\I$ is $E$-essential.
\end{proof}

\begin{remark}
It follows from the above proof that $\widetilde M \1_E$ is absolutely continuous with respect to Lebesgue measure on $E$.
\end{remark}

To conclude, the only if direction of Theorem ~B follows immediately from Lemma ~\ref{sing}. Lemma ~\ref{mcurrent} implies that $\pi_*(\widetilde M_\I)=R_\I \wedge df_{i_n}\wedge\cdots\wedge df_{i_1}/(2\pi i)^n=$ is a positive current with strictly positive mass if $\I$ is essential. In particular, $R^f_\I\not\equiv 0$, which proves the if direction of Theorem ~B. Hence Theorem ~B is proved.

\section{Annihilators}\label{annsection}
We are particularly interested in the annihilator ideal of $R^f$. Recall from Theorem B that $R^f_\I\not\equiv 0$ if and only if $\I$ is essential. Hence
\begin{equation}\label{irreducible}
\ann R^f=\bigcap_{\I\text{ essential}} \ann R^f_\I.
\end{equation}
In this section we prove Theorem A, which gives estimates of the size of $\ann R^f$. We also prove Theorem C, which gives an explicit description of $R^f$ in case $(f)$ is a complete intersection ideal. In fact, Theorems A and C are consequences of Theorem ~\ref{annihilatorn} and Proposition ~\ref{denandra} below.

\begin{thm}\label{annihilatorn}
Suppose that $f=(f_1,\ldots,f_m)$ generates an $\m$-primary ideal $\a\subset \O^n_0$. Let $R^f=(R^f_\I)$ be the corresponding Bochner-Martinelli residue current. Then $\ann R^f=\a$ if and only if $\a$ is a complete intersection ideal, that is, $\a$ is generated by $n$ germs of holomorphic functions. 

Moreover if $\a$ is a complete intersection ideal, then for
$\I=\{i_1,\ldots,i_n\}\subseteq\{1,\ldots,m\}$ 
\begin{equation}\label{jojojo}
R^f_\I=C_\I~ 
\dbar\left [\frac{1}{f_{i_1}}\right ]\wedge\cdots\wedge 
\dbar\left [\frac{1}{f_{i_n}}\right ],
\end{equation}
where $C_\I$ is a non-zero constant if $f_{i_1},\ldots, f_{i_n}$ generates $\a$ and zero otherwise. 
\end{thm}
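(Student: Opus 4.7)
The plan is to first establish the formula \eqref{jojojo} (under the complete-intersection hypothesis), from which the \emph{if} direction of the annihilator characterization follows, and then to prove the \emph{only if} direction using Theorem~B and a result of Hickel on Jacobians. For the formula, assume $\a$ is a complete intersection and split on whether $\{f_{i_1},\dots,f_{i_n}\}$ generates $\a$. If they do not, choose $n$ generators $g_1,\dots,g_n$ of $\a$ among the $f_j$ and expand $f_{i_k}=\sum_l a_{i_k l}\, g_l$. Non-generation forces the constant matrix $(a_{i_k l}(0))$ to be singular, by Nakayama's lemma, so on a log-resolution (where $g_l=\mu g'_l$) the pull-backs $f'_{i_k}=\sum_l a_{i_k l}\,g'_l$ factor, on every Rees divisor $E$, through a linear map of rank $<n$ at each point. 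Hence $\Psi_\I|_E$ cannot be dominant (Lemma~\ref{bralemma}), $\I$ is not essential, and Theorem~B yields $R^f_\I=0$; one takes $C_\I=0$.

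If instead $\{f_{i_k}\}$ generates $\a$, then $g:=(f_{i_1},\dots,f_{i_n})$ is itself a complete-intersection $n$-tuple on $\C^n$. Here I would apply Andersson's metric-independence result for the current from \cite{A}: viewing $g$ as a section of the dual of a trivial rank-$n$ bundle $V_\I$, the $\Lambda^n V_\I$-valued residue current does not depend on the choice of Hermitian metric on $V_\I$. With the standard metric, Passare-Tsikh-Yger identify the single non-trivial component with $\dbar[1/f_{i_1}]\wedge\cdots\wedge\dbar[1/f_{i_n}]$. Choosing instead the Hermitian metric $H$ on $V_\I$ designed so that $|g|^2_H=|f|^2$---positive-definite precisely because the transition matrix $A$ relating $f$ and $g$ has full rank at $0$, which is equivalent to $\{f_{i_k}\}$ generating $\a$---Andersson's formula now has $|f|^2$ in the denominator, matching the denominator of $R^f_\I$. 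After bookkeeping the relations between the standard and the $H$-dual sections, and the corresponding frame changes, one identifies $R^f_\I$ as a non-zero constant multiple of $\dbar[1/f_{i_1}]\wedge\cdots\wedge\dbar[1/f_{i_n}]$, giving \eqref{jojojo}. I expect this identification to be the main technical obstacle, since the mixed nature of the defining formula for $R^f_\I$ (standard dual section but nonstandard denominator) makes the matching with Andersson's $H$-formula a nontrivial calculation, even granted metric independence.

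Having the formula, the \emph{if} direction is immediate: assuming $\a$ is a complete intersection, pick any $\I$ with $\{f_{i_k}\}$ generating $\a$. By the formula, $\ann R^f_\I=\a$ via the Duality Principle of Dickenstein-Sessa \cite{DS} and Passare \cite{P}, so $\ann R^f=\bigcap_\I \ann R^f_\I \supseteq \a$, which combined with Andersson's inclusion $\ann R^f\subseteq\a$ from \eqref{eqa} gives equality. For the \emph{only if} direction, suppose $\ann R^f=\a$ and, toward contradiction, that $\a$ is not a complete intersection. Hickel's theorem \cite{Hi} then forces every $n\times n$ Jacobian minor $J_\I=\det(\partial f_{i_k}/\partial z_l)$ to lie in $\a$. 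But Lemma~\ref{mcurrent} shows that for any essential $\I$ the push-forward $\pi_*(\widetilde M_\I)=R^f_\I\wedge df_{i_n}\wedge\cdots\wedge df_{i_1}/(2\pi i)^n=\pm R^f_\I\cdot J_\I\, dz/(2\pi i)^n$ is a non-zero positive Dirac mass at the origin, so $J_\I R^f_\I\neq 0$ and hence $J_\I\notin \ann R^f=\a$---a contradiction.
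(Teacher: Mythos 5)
Your overall strategy matches the paper's: establish the formula \eqref{jojojo} via Andersson's metric independence, then derive the annihilator characterization from the formula together with Hickel's result and Lemma~\ref{mcurrent}. For the formula you split on whether $f_\I$ generates $\a$ and handle the non-generating case via Theorem~B plus a Nakayama/rank argument on the log-resolution; the paper instead fixes a generating $n$-tuple $f_1,\dots,f_n$, treats every $\L$ uniformly, and reads off $C_\L=\det\bar\Phi_\L(0)$, which vanishes exactly when $f_\L$ fails to generate $\a$. The two routes amount to the same thing. As for the ``main technical obstacle'' you flag in the generating case, the paper resolves it by the single determinant identity $\tilde s_\I\wedge(\dbar\tilde s_\I)^{n-1}=\det\Psi\cdot s_\I\wedge(\dbar s_\I)^{n-1}$ after picking the metric so that $\|f_\I\|^2=|f|^2$; no further bookkeeping is needed.

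The only-if direction, however, has a genuine gap. You assert that when $\a$ is not a complete intersection, ``Hickel's theorem forces every $n\times n$ Jacobian minor $J_\I$ to lie in $\a$.'' That is only immediate when $\codim V(f_\I)<n$, where Hickel gives $J_\I\in(f_\I)\subseteq\a$. When $\codim V(f_\I)=n$ Hickel gives the opposite, $J_\I\notin(f_\I)$, and $(f_\I)\varsubsetneq\a$ alone does not imply $J_\I\in\a$. To conclude $J_\I\in\a$ in this case you need the \emph{second} part of Hickel's theorem plus a commutative-algebra argument: $\bar J_\I$ generates the one-dimensional socle of the Gorenstein Artinian ring $\O_0^n/(f_\I)$; picking $g\in\a\setminus(f_\I)$ and multiplying $\bar g$ by a suitable power of $\m$ lands a nonzero element in the socle, hence a nonzero multiple of $\bar J_\I$, which shows $J_\I$ is an $\O_0^n$-combination of $g$ and elements of $(f_\I)$ and therefore lies in $\a$. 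This is precisely the paper's Lemma~\ref{lambi} (equivalently: any $\m$-primary ideal not containing $\jac(f_\I)$ is contained in $(f_\I)$), and without it your chain of implications does not close. The paper reaches the same conclusion by first showing that essential $\I$'s necessarily have $\codim V(f_\I)=n$ and then using Lemma~\ref{lambi} to force $\bigcap_{\I\text{ ess}}(f_\I)=\a$, hence $(f_\I)=\a$ for essential $\I$.
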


For $\I=\{i_1,\ldots, i_n\}\subseteq \{1,\ldots, m\}$, let $f_\I$ denote the tuple $f_{i_1}, \ldots, f_{i_n}$, which we identify with the section $\sum_{i\in\I}f_{i}e_{i}^*$ of $V$. To prove (the first part of) Theorem ~\ref{annihilatorn} we will need two results. 

The first result is a simple consequence of Lemma ~\ref{mcurrent}. 
Given a tuple $g$ of holomorphic functions $g_1,\ldots , g_n\in\O_0^n$, let $\jac (g)$ denote the Jacobian determinant $\det |\frac{\partial g_i}{\partial z_j}|_{i,j}$.  
\begin{lma}\label{pink}
We have that  $\jac (f_\I)\in \ann R^f_\I$ if and only if $R^f_\I\equiv 0$.
\end{lma}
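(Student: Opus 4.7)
The plan is to use the identity
\[
df_{i_1}\wedge\cdots\wedge df_{i_n}=\jac(f_\I)\,dz_1\wedge\cdots\wedge dz_n
\]
together with the push-forward formula and the positivity of $\widetilde M_\I$ established in Lemma~\ref{mcurrent}. One direction of the lemma is trivial: if $R^f_\I\equiv 0$ then any holomorphic germ, in particular $\jac(f_\I)$, annihilates it. So the real content is the converse.

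For the converse, suppose $\jac(f_\I)\in\ann R^f_\I$. The first step is to translate this into a statement about $\widetilde M_\I$. Since $df_{i_n}\wedge\cdots\wedge df_{i_1}=\pm\jac(f_\I)\,dz_1\wedge\cdots\wedge dz_n$, the hypothesis $\jac(f_\I)R^f_\I=0$ is equivalent to
\[
R^f_\I\wedge df_{i_n}\wedge\cdots\wedge df_{i_1}=0,
\]
and hence, by the identity $\pi_*(\widetilde M_\I)=R^f_\I\wedge df_{i_n}\wedge\cdots\wedge df_{i_1}/(2\pi i)^n$ recorded just above Lemma~\ref{mcurrent}, to the vanishing of $\pi_*\widetilde M_\I$.

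The key step is now to upgrade the vanishing of the push-forward to the vanishing of $\widetilde M_\I$ itself. Here is where positivity enters: by Lemma~\ref{mcurrent}, $\widetilde M_\I$ is a positive $(n,n)$-current on the $n$-dimensional manifold $X$, i.e.\ a positive Radon measure. Since $\pi$ is proper (it is a modification and $R^f_\I$ has point support at $0$, so everything is supported on the compact exceptional fiber), the total mass of $\pi_*\widetilde M_\I$ equals that of $\widetilde M_\I$. Thus $\pi_*\widetilde M_\I=0$ forces $\widetilde M_\I\equiv 0$. By the second half of Lemma~\ref{mcurrent}, this means there is no exceptional prime $E$ for which $\I$ is $E$-essential, in other words $\I$ is not essential. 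Theorem~B then yields $R^f_\I\equiv 0$, as required.

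The main obstacle, or rather the crucial observation, is the positivity step: without knowing $\widetilde M_\I\ge 0$, the vanishing of $\pi_*\widetilde M_\I$ would give no information, since a generic current can easily be killed by push-forward (e.g.\ two exceptional components could contribute with opposite signs). Positivity removes this possibility and is precisely what Lemma~\ref{mcurrent} is set up to provide. The rest of the argument is a bookkeeping chain through the identifications $\jac(f_\I)R^f_\I\leftrightarrow\pi_*\widetilde M_\I\leftrightarrow\widetilde M_\I\leftrightarrow\text{essentialness of }\I\leftrightarrow R^f_\I$.
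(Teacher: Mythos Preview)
Your proof is correct and follows essentially the same route as the paper's: both rely on the identity $df_{i_1}\wedge\cdots\wedge df_{i_n}=\pm\jac(f_\I)\,dz_1\wedge\cdots\wedge dz_n$, the positivity of $\widetilde M_\I$ from Lemma~\ref{mcurrent}, and Theorem~B. The paper argues the contrapositive in one line (``from the previous section'' $R^f_\I\not\equiv0$ forces $R^f_\I\wedge df_{i_n}\wedge\cdots\wedge df_{i_1}\not\equiv0$, hence $\jac(f_\I)\notin\ann R^f_\I$), whereas you unpack the same chain in the forward direction with the mass-preservation step made explicit; the content is the same.
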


\begin{proof}
The if direction is obvious. Indeed if $R^f_\I\equiv 0$, then $\ann R^f_\I=\O_0^n$.

For the converse, suppose that $R^f_\I\not\equiv 0$. 
From the previous section 
we know that this implies that $R_\I^f\wedge df_{i_n}\wedge\cdots\wedge df_{i_1}\not\equiv 0$. However the coefficient of $df_{i_n}\wedge\cdots\wedge df_{i_1}$ is just $\pm \jac (f_\I)$ and so $\jac (f_\I)\notin \ann R^f_\I$. 
\end{proof}

The next result is Theorem ~1.1 and parts of the proof thereof in ~\cite{Hi}.
Recall that the socle $\soc(N)$ of a module $N$ over a local ring $(R,\m)$ consists of the elements in $N$ that are annihilated by $\m$, see for example ~\cite{BH}.

\begin{thm}\label{hickel}
Assume that $g_1,\ldots,g_n$ generate an ideal $\a\subset\O_0^n$. Then $\jac(g_1,\ldots,g_n)\in\a$ if and only if $\codim V(\a) < n$. 

Moreover, if $\codim V(\a)=n$, then the image of $\jac (g)$ under the natural surjection $\O_0^n \to \O_0^n/\a$ generates the socle of $\O_0^n/\a$.
\end{thm}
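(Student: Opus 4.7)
The theorem has two parts --- the characterization of when $\jac(g) \in \a$, and the socle identification in the $\m$-primary case --- and my plan is to establish the socle statement first and then deduce the iff-characterization from it. For the socle statement, I would assume $\a = (g_1, \ldots, g_n)$ is $\m$-primary, so $g_1, \ldots, g_n$ is a regular sequence and $\O_0^n / \a$ is an Artinian Gorenstein local ring with one-dimensional socle. I would invoke the classical Scheja-Storch construction: writing $g_j(z) - g_j(w) = \sum_i H_{ij}(z, w)(z_i - w_i)$ for holomorphic germs $H_{ij}$ on $(\C^n \times \C^n, 0)$, one obtains the identity $\jac(g) = \det H(z, z)$, and Scheja-Storch's theorem identifies the class of $\det H(z, z)$ as the canonical socle generator under the self-duality of $\O_0^n/\a$ induced by the Grothendieck residue $\Res_0[\cdot \, dz_1 \wedge \cdots \wedge dz_n/(g_1, \ldots, g_n)]$. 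This shows that $\jac(g)$ generates the socle, and in particular that $\jac(g) \notin \a$.

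With the socle statement in hand, the iff-characterization follows in two pieces. The forward direction ($\codim V(\a) = n \Rightarrow \jac(g) \notin \a$) is immediate from the socle statement. For the reverse direction ($\codim V(\a) < n \Rightarrow \jac(g) \in \a$): the germ $V(\a)$ has positive dimension at the origin, so at a smooth point of $V(\a)$ the differentials $dg_1, \ldots, dg_n$ must annihilate a nonzero tangent vector and are therefore linearly dependent. From the identity $dg_1 \wedge \cdots \wedge dg_n = \jac(g) \, dz_1 \wedge \cdots \wedge dz_n$ we conclude that $\jac(g)$ vanishes on $V(\a)$, hence $\jac(g) \in \sqrt{\a}$.

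The hard part will be upgrading $\jac(g) \in \sqrt{\a}$ to $\jac(g) \in \a$, since this does not follow from pointwise vanishing alone. In the case $\codim V(\a) < n$ the sequence $g_1, \ldots, g_n$ is not regular, so the Koszul complex $K_\bullet(g; \O_0^n)$ has nontrivial first homology, providing relations $\sum a_j g_j = 0$ beyond the trivial Koszul syzygies. My plan would be to combine such a nontrivial syzygy with the identity $dg_1 \wedge \cdots \wedge dg_n = \jac(g) \, dz_1 \wedge \cdots \wedge dz_n$, using Cramer's rule on the Jacobian matrix, to write $\jac(g)$ explicitly as an $\O_0^n$-linear combination of the $g_j$'s. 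Making this exterior-algebra manipulation rigorous in the analytic local setting is the technical core of Hickel's original proof in \cite{Hi}, and I expect it to be the most delicate step.
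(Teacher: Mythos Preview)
The paper does not prove this theorem: it is quoted from Hickel~\cite{Hi} (introduced in the text as ``Theorem~1.1 and parts of the proof thereof in~\cite{Hi}'') and then used as a black box in the proof of Lemma~\ref{lambi} and Theorem~\ref{annihilatorn}. There is therefore no proof in the paper to compare your proposal against.

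That said, your outline is a reasonable reconstruction of how such a result is established. The socle statement via the Scheja--Storch trace construction is the standard route and is correct; from it the implication $\codim V(\a)=n\Rightarrow\jac(g)\notin\a$ is immediate. For the converse direction you have correctly located the difficulty. Differentiating a syzygy $\sum_j a_j g_j=0$ and wedging with $dg_1\wedge\cdots\wedge\widehat{dg_k}\wedge\cdots\wedge dg_n$ gives $a_k\,\jac(g)\in\a$ for every $k$, but since the syzygy coefficients $a_k$ need not generate the unit ideal (e.g.\ $g_1=z_1z_2$, $g_2=z_2z_3$, $g_3=z_1z_3$ in $\O_0^3$, where the obvious syzygies have all coefficients in $\m$), this does not by itself yield $\jac(g)\in\a$. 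Bridging that gap---via a more careful analysis of the syzygy module or a direct computation---is precisely the technical content of Hickel's argument, and you are right to flag it as the delicate step rather than claim it as done.
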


\begin{lma}\label{lambi}
If $R^f_\I\not\equiv 0$ and $\codim V(f_\I)=n$, then $\ann R^f_\I\subseteq (f_\I)$.
\end{lma}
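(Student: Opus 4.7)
The plan is to argue by contradiction, combining Hickel's socle description (Theorem \ref{hickel}) with the criterion of Lemma \ref{pink}. Suppose $h \in \ann R^f_\I$ with $h \notin (f_\I)$; we aim at the contradiction $R^f_\I \equiv 0$. Because $\codim V(f_\I) = n$, the local ring $A := \O_0^n/(f_\I)$ is Artinian and, being a complete intersection, Gorenstein, so by Theorem \ref{hickel} its socle is the one-dimensional $\C$-space spanned by the image of $\jac(f_\I)$. Since every nonzero ideal of an Artinian local ring meets the socle, from $\bar h \neq 0$ we obtain $g \in \O_0^n$ and $c \in \C \setminus \{0\}$ with
\[
gh - c\,\jac(f_\I) \in (f_\I).
\]

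Granting the inclusion $(f_\I) \subseteq \ann R^f_\I$, both $gh$ (because $h$ annihilates $R^f_\I$) and $gh - c\,\jac(f_\I)$ (because it lies in $(f_\I)$) annihilate $R^f_\I$; subtracting yields $\jac(f_\I) \in \ann R^f_\I$, whence Lemma \ref{pink} gives $R^f_\I \equiv 0$, contradicting the standing hypothesis. This is the whole argument modulo the inclusion $(f_\I)\subseteq \ann R^f_\I$.

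The hard part is proving that inclusion, i.e., that $f_{i_k} R^f_\I = 0$ for every $i_k \in \I$. The plan is to pull back to a log-resolution $\pi\colon X \to (\C^n,0)$ of $(f)$ and argue locally near a Rees divisor $E$ for which $\I$ is essential (such an $E$ exists by Theorem B since $R^f_\I \not\equiv 0$). Remark \ref{regn} gives $\ord_E(f_{i_k}) = \ord_E(\fa) =: a$ for each $i_k \in \I$, so in local coordinates with $E = \{\sigma_1 = 0\}$ the expression for $\widetilde R_\I$ derived before Lemma \ref{sing} has the form $\dbar[1/\sigma_1^{na}] \wedge \omega$ with $\omega$ smooth, and $\pi^*(f_{i_k})\,\widetilde R_\I$ reduces to $\dbar[1/\sigma_1^{(n-1)a}] \wedge \omega'$. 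Combining this with the vanishing order of $\pi^*(dz_1\wedge\cdots\wedge dz_n)$ along $E$, a pseudomeromorphic push-forward computation in the spirit of the proof of Lemma \ref{mcurrent} yields $\pi_*(\pi^*(f_{i_k})\,\widetilde R_\I) = f_{i_k} R^f_\I = 0$, finishing the argument.
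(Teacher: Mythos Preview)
Your first two paragraphs track the paper's proof closely: both use the Gorenstein socle description of Theorem~\ref{hickel} together with Lemma~\ref{pink}. You are in fact more explicit than the paper in isolating the auxiliary inclusion $(f_\I)\subseteq\ann R^f_\I$; the paper's argument uses it tacitly when it passes from $\jac(f_\I)\in\tilde J$ (the image of $J$ in $\O_0^n/(f_\I)$) to a contradiction with $\jac(f_\I)\notin J$.

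The genuine gap is in your third paragraph: the inclusion $(f_\I)\subseteq\ann R^f_\I$ that you set out to prove is \emph{false} in general, so no log-resolution computation can establish it. Example~\ref{fragan} already shows this: for $f=(z^2,\,zw+w^2,\,Bw^2)$ and $\I=\{1,2\}$ one has $\codim V(f_\I)=2$ and $R^f_\I\not\equiv0$, yet the explicit formula there gives $f_1\,R^f_{\{1,2\}}=z^2R^f_{\{1,2\}}=C_0\,\dbar[1/z]\wedge\dbar[1/w]\ne0$. Concretely, your sketch breaks because the vanishing order of $\pi^*(dz_1\wedge\cdots\wedge dz_n)$ along a Rees divisor $E$ is generally too small to cancel the remaining pole $\dbar[1/\sigma_1^{(n-1)a}]$ (in that example $n=2$, $a=2$, while the Jacobian of the blowup vanishes only to first order along $E$); it also treats only one $E$ at generic points. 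The paper does not attempt this step at all; in the one place Lemma~\ref{lambi} is invoked, inside the proof of Theorem~\ref{annihilatorn}, the standing hypothesis $\ann R^f=(f)$ directly supplies $(f_\I)\subseteq(f)=\ann R^f\subseteq\ann R^f_\I$, after which both your argument and the paper's go through.
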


\begin{proof}
We claim that it follows that every $\m$-primary ideal $J\subset \O_0^n$ that does not contain $\jac (f_\I)$ is contained in $(f_\I)$. Applying the claim to $\ann R^f_\I\not\ni \jac (f_\I)$ (if $R^f_\I\not\equiv 0$) proves the lemma.

The proof of the claim is an exercise in commutative algebra; however, we supply the details for the reader's convenience. 
Suppose that $J\subset \O_0^n$ is an $\m$-primary ideal such that $\jac (f_\I)\notin J$, but that there is a $g\in J$ such that $g\notin (f_\I)$. The latter condition means that $0\neq \tilde g \in \tilde J$, where $\tilde g$ and $\tilde J$ denote the images of $g$ and $J$, respectively, under the surjection $\O_0^n \to \O_0^n/(f_\I)$. Then, for some integer $\ell$, 
$\m^\ell \tilde g \neq 0$ but $\m^{\ell+1} \tilde g =0$ in $A:= \O_0^n/(f_\I)$; in other words $\m^\ell \tilde g$ is in the socle of $A$. 
According to Theorem ~\ref{hickel}, the socle of $A$ is generated by $\jac (f_\I)$ and so it follows that $\jac (f_\I)\in\tilde J$. This, however, contradicts the assumption made above and the claim is proved.
\end{proof}

\begin{proof}[Proof of Theorem ~\ref{annihilatorn}]
We first prove that $\ann R^f=\a$ implies that $\a$ is a complete intersection ideal. Let us therefore assume that $\ann R^f = \a$.

We claim that under this assumption, $\codim V(f_\I)=n$ as soon as $\I$ is essential. To show this, assume that there exists an essential multi-index $\I=\{i_1,\ldots, i_n\} \subseteq \{1, \ldots, m\}$ such that $\codim V(f_\I)<n$. Then by Theorem ~\ref{hickel} $\jac (f_\I)\in (f_\I)\subseteq \a$. 
However, by Lemma ~\ref{pink} $\jac (f_\I)\notin \ann R^f_\I$. Thus we have found an element that is in $\a$ but not in $\ann R^f$, which contradicts the assumption. This proves the claim.

Next, let us consider the inclusion
\begin{equation}\label{babel}
\bigcap_{\I \text{ essential}} (f_\I)\subseteq \a.
\end{equation}
Assume that the inclusion is strict. By the claim above $\codim V(f_\I)=n$ if $\I$ is essential and so by Lemma ~\ref{lambi} 
\[
\ann R^f = \bigcap_{\I \text{ essential}} \ann R^f_\I 
\subseteq \bigcap_{\I \text{ essential}} (f_\I) \varsubsetneq \a,
\]
which contradicts the assumption that $\ann R^f = \a$. 
Hence equality must hold in \eqref{babel}, which means that $\a$ is generated by $f_\I$, whenever $\I$ is essential. (Note that there must be at least one essential multi-index if $R^f\not\equiv 0$.) To conclude, we have proved that $\ann R^f=\a$ implies that $\a$ is a complete intersection ideal. 

\smallskip

It remains to prove that if $\a$ is a complete intersection ideal, then $R^f_\I$ is of the form \eqref{jojojo} if $f_\I$ generates $\a$ and zero otherwise. Indeed, if $R^f_\I$ is given by \eqref{jojojo}, then $\ann R^f_\I = (f_\I)=\a$ by the classical Duality Principle; see the Introduction. This means that $\ann R^f_\I$ is either $\a$ or (if $R^f_\I\equiv 0$)  $\O_0^n$ and so $\ann R^f = \bigcap \ann R^f_\I = \a$.

Assume that $\a$ is a complete intersection ideal. 
Then, by Nakayama's Lemma $\a$ is in fact generated by $n$ of the $f_i$, compare to the discussion just before Theorem C. Assume that $\a$ is generated by $f_1,\ldots, f_n$; then $f_\ell=\sum_{j=1}^n \varphi_j^\ell f_j$ for some holomorphic functions $\varphi_j^\ell$. (Note that $\varphi_j^\ell=\delta_{j,\ell}$ for $\ell \leq n$.)  

We will start by showing that $R^f_\I$, where $\I=\{1,\ldots,n\}$, is of the form \eqref{jojojo}. 
Recall from Section ~\ref{rescurr} that 
\begin{equation}\label{forstas}
R_\I=
\dbar|f|^{2\lambda} \wedge \frac{s_\I\wedge (\dbar s_\I)^{n-1}}{|f|^{2n}}\bigg |_{\lambda=0}.
\end{equation}
Let us now compare \eqref{forstas} with the current $R(f_\I)$, that is, the residue current associated with the section $f_\I$ of the sub-bundle $\widetilde V$ of $V$ generated by $e_1^*,\ldots,e_n^*$. Since $\codim V(f_\I)=n$, the current $R(f_\I)$ is independent of the choice of Hermitian metric on $\widetilde V$ according to ~\cite[Proposition~2.2]{A}. More precisely, 
\begin{equation*}
R(f_\I)=
\dbar|g|^{2\lambda} \wedge \frac{\tilde s_\I\wedge (\dbar \tilde s_\I)^{n-1}}{\|f_\I\|^{2n}}\bigg |_{\lambda=0}, 
\end{equation*}
where $\|\cdot \|$ is any Hermitian metric on $\widetilde V$, $\tilde s_\I$ is the dual section of $f_\I$ with respect to $\|\cdot\|$, and $g$ is any tuple of holomorphic functions that vanishes at $\{f_\I=0\}=\{0\}$; in particular, we can choose $g$ as $f$. 

Let $\Psi$ be the Hermitian matrix with entries $\psi_{i,j}=\sum_{\ell=1}^m \varphi_i^\ell \bar \varphi_j^\ell$. Then $\Psi$ is positive definite and so it defines a Hermitian metric on $\widetilde V$ by $\|\sum_{i=1}^n \xi_i e_i\|^2=\sum_{1\leq i,j \leq n}\psi_{i,j} \xi_i \bar \xi_j$.
Observe that $\|f_\I\|^2= |f_1|^2+\cdots + |f_m|^2$ and moreover that $\tilde s_\I=\sum_{1\leq i,j \leq n} 
\psi_{i,j} \bar f_j e_i$. A direct computation gives that 
$\tilde s_\I \wedge (\dbar\tilde s_\I)^{n-1}=\det(\Psi) s_\I \wedge (\dbar s_\I)^{n-1}$. It follows that $R(f_\I)= C R_\I$, where $C=\det(\Psi(0))\neq 0$. 
By ~\cite[Theorem~1.7]{A} 
$R(f_\I)=\dbar[1/f_1]\wedge\cdots\wedge\dbar [1/f_n]\wedge e_n\wedge\cdots\wedge e_1$, and so we have proved that $R^f_\I$ is of the form ~\eqref{jojojo}. 

Next, let $\L$ be any multi-index $\{\ell_1,\ldots,\ell_n\} \subseteq \{1,\ldots,m\}$. By arguments as above 
$s_\L \wedge (\dbar s_\L)^{n-1}=\det(\bar \Phi_\L) s_\I\wedge (\dbar s_\I)^{n-1}$, where $\Phi_\L$ is the matrix with entries $\varphi_j^{\ell_i}$. Hence $R_\L=C_\L R^f_\I e_{\ell_n}\wedge\cdots\wedge e_{\ell_1}$, where $C_\L=\det(\bar \Phi_\L(0)) $. Note that $C_\L$ is non-zero precisely when $f_1,\ldots,f_n$ can be expressed as holomorphic combinations of $f_{\ell_1},\ldots,f_{\ell_n}$, that is, when $f_{\ell_1},\ldots,f_{\ell_n}$ generate $\a$. 
Hence $R_\mathcal L$ is of the form \eqref{jojojo} if $f_\L$ generates $\a$ and zero otherwise, and we are done.
\end{proof}

\begin{prop}\label{denandra}
Suppose that $f=(f_1,\ldots,f_m)$ generates an $\m$-primary ideal $\a\subset \O_0^n$, where $n\geq 2$. Let $R^f$ be the corresponding Bochner-Martinelli residue current. Then the inclusion
\begin{equation*}
\overline{\a^n}\subseteq \ann R^f
\end{equation*}
is strict. 
\end{prop}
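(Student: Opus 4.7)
The plan is to find a germ $h\in\ann R^f\setminus\overline{(f)^n}$. Setting $\a=(f)$, Rees's theorem gives $h\in\overline{\a^n}$ if and only if $\ord_E(h)\ge n\,\ord_E(\a)$ for every Rees divisor $E$ of $\a$, so the task reduces to exhibiting $h\in\ann R^f$ with $\ord_{E_0}(h)<n\,\ord_{E_0}(\a)$ at some Rees divisor $E_0$.

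Suppose first that $\a$ is a complete intersection ideal. By Theorem~\ref{annihilatorn} we then have $\ann R^f=\a$. Fix any Rees divisor $E$ and a log-resolution $\pi\colon X\to(\C^n,0)$ of $\a$: at a general point of $E$, the ideal $\a\cdot\O_X=\O_X(-Z)$ is locally principal and generated by one of the $f_i$, and this $f_i$ then satisfies $\ord_E(f_i)=\ord_E(\a)$. Since $n\ge 2$ and $\ord_E(\a)\ge 1$, $\ord_E(f_i)<n\,\ord_E(\a)$, and so $f_i\in\ann R^f=\a$ lies outside $\overline{\a^n}$.

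In the non-CI case we have $\ann R^f\subsetneq\a$ by Theorem~\ref{annihilatorn}, so this short argument fails. Here my plan is to work on the log-resolution $\pi$ and exploit that $h\in\ann R^f$ is equivalent to $\pi_*(\pi^* h\cdot\widetilde R)=0$, a condition strictly weaker than $\pi^* h\cdot\widetilde R\equiv 0$ on $X$: the pullback of a top-form on $\C^n$ to $X$ carries a Jacobian factor vanishing to order $k_E\ge 1$ along every exceptional prime $E$, and the form factor $s'_\I\wedge(\dbar s'_\I)^{n-1}/\nu^n$ in the description of $\widetilde R_\I\1_E$ from Lemma~\ref{sing} often carries additional antiholomorphic factors contributing fiberwise cancellation after contraction with such a top-form. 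Together these effects lower the effective threshold for $h$ to annihilate $R^f$ along a Rees divisor $E$ strictly below $n\,\ord_E(\a)$, and an element $h$ achieving this lower threshold---for instance a product $g\cdot f_{i_1}\cdots f_{i_{n-1}}$ for an essential multi-index $\I_0=\{i_1,\ldots,i_n\}$ at $E_0$ with $g\in\m$ of $E_0$-order strictly smaller than $\ord_{E_0}(\a)$, or else a linear generator of $\a$ when $\a$ admits one---is the candidate $h\in\ann R^f\setminus\overline{\a^n}$; the non-membership in $\overline{\a^n}$ follows in the former case from \eqref{regna}, which gives $\ord_{E_0}(h)=\ord_{E_0}(g)+(n-1)\ord_{E_0}(\a)<n\,\ord_{E_0}(\a)$.

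The main obstacle is the rigorous verification that the constructed $h$ annihilates $R^f$, i.e.\ that $\pi_*(\pi^* h\cdot\widetilde R_\J)=0$ for every multi-index $\J$. Using the SEP of $\dbar[1/\sigma_i^a]$ with respect to $\{\sigma_i=0\}$ (Section~\ref{pseudo}), the verification localizes to a generic point of each essential exceptional prime $E$, where the explicit formula for $\widetilde R_\J\1_E$ from Lemma~\ref{sing} is combined with the contraction against $\pi^*(dz_1\wedge\cdots\wedge dz_n)$. Tracking the divisibility of $\pi^* h$ by the local equation of $E$ together with the factors from the Jacobian of $\pi$ and from the form part of $\widetilde R_\J$ should then yield the desired vanishing of $\pi_*(\pi^* h\cdot\widetilde R_\J)$, completing the strict inclusion.
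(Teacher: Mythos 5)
Your proposal has a genuine gap: the non-CI case—which is where the work lies, since the paper's Theorem~\ref{annihilatorn} already gives $\ann R^f\subsetneq\a$ there—is left unproven. You propose the candidate $h=g\cdot f_{i_1}\cdots f_{i_{n-1}}$ (or ``a linear generator of $\a$'') and correctly compute that $\ord_{E_0}(h)<n\,\ord_{E_0}(\a)$, so $h\notin\overline{\a^n}$ by the valuative criterion. But you never establish $h\in\ann R^f$; you explicitly flag this as ``the main obstacle'' and conclude with ``should then yield the desired vanishing.'' That is the entire content of the proposition, not a deferred detail, and it is not at all clear that this particular candidate works: on the resolution, $\pi^*h\cdot\widetilde R_\L\1_{E}$ is generically nonvanishing precisely because $\vE(h)<n\vE(\a)$, so you would need a nontrivial cancellation under $\pi_*$ that you neither exhibit nor explain. (Your CI-case argument is correct, but it is both special to the CI case and unnecessary, since the paper's argument is uniform.)

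The paper takes a quite different and sharper route that avoids any pushforward cancellation argument. From the computation in Lemma~\ref{mcurrent} one sees that for any multi-indices $\J,\L$ the $(n,n)$-current
\[
df_{j_1}\wedge\cdots\wedge df_{j_n}\wedge R^f_\L=\jac(f_\J)\,dz_1\wedge\cdots\wedge dz_n\wedge R^f_\L
\]
is either zero or a constant times the Dirac mass at the origin. Multiplying by any coordinate $z_k$ kills a point mass, hence $z_k\jac(f_\J)\,R^f_\L=0$, i.e.\ $\m\,\jac(f_\J)\subseteq\ann R^f$ for \emph{every} multi-index $\J$ (no CI hypothesis needed). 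Then for an essential $\I$ with Rees divisor $E$ one computes $\vE(z_k\jac(f_\I))\le n\,\vE(\a)-n+1<\vE(\overline{\a^n})$ when $n\ge2$, so $z_k\jac(f_\I)\in\ann R^f\setminus\overline{\a^n}$. The key insight you are missing is that $\jac(f_\I)$ (not a product of $n-1$ of the generators) is the right germ to multiply by $\m$, and that its membership in $\ann R^f$ after such multiplication follows from the point-mass structure already established for the auxiliary current $\widetilde M_\I$ rather than from a case-by-case log-resolution analysis.
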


Observe that Proposition ~\ref{denandra} fails when $n=1$. Then, in fact, $\a=\ann R^f=\overline \a$. 

\begin{proof}
We show that $\ann R^f\setminus \overline{\a^n}$ is non-empty. Consider multi-indices $\mathcal J=\{j_1,\ldots,j_n\}, \mathcal L=\{\ell_1,\ldots,\ell_n\}\subseteq\{1,\ldots,m\}$. By arguments as in the proof of Lemma ~\ref{mcurrent} one shows that 
\begin{equation*}
df_{j_1}\wedge\cdots\wedge df_{j_n}\wedge R^f_{\mathcal L}=
\jac(f_{\mathcal J})dz_1\wedge\cdots\wedge dz_n \wedge R^f_{\mathcal L}
\end{equation*}
either vanishes or is equal to a constant times the Dirac measure at the origin. Thus $z_k\jac(f_{\mathcal J}) R^f_{\mathcal L}=0$ for all coordinate functions $z_k$. It follows that $\m\jac(f_\I)\subseteq\ann R^f$ for all multi-indices $\I=\{i_1,\ldots,i_n\}$. 

Next, suppose that $\I=\{i_1,\ldots,i_n\}$ is essential with respect to a Rees divisor $E$ of $\a$. Then a direct computation gives that 
$\vE(df_{i_1}\wedge\ldots\wedge df_{i_n})=n\vE(\a)$ and $\vE(dz_1\wedge\ldots\wedge dz_n)\geq \sum_{i=1}^n\vE(z_i)-1$. Note that $\vE(z_k)\geq 1$ for $1\leq k\leq n$. Since $df_{i_1}\wedge\cdots\wedge df_{i_n}=
\jac(f_{\mathcal I})dz_1\wedge\cdots\wedge dz_n$ it follows that
\begin{equation*}
\vE(z_k\jac(f_\I))\leq n~\vE(\a)-n+1=\vE(\overline{\a^n})-n+1
\end{equation*}
for $1\leq k\leq n$. 
Hence, if $n\geq 2$, there are elements, for example $z_k \jac(f_\I)$, in $\m \jac (f_\I)$ that are not in $\overline{\a^n}$. This concludes the proof.
\end{proof}

\begin{proof}[Proofs of Theorems A and C]
Theorem A is an immediate consequence of (the first part of) Theorem ~\ref{annihilatorn} and Proposition ~\ref{denandra}. 

Suppose that $(f)$ is a complete intersection ideal. Then by Theorem ~B and (the second part of) Theorem ~\ref{annihilatorn} we have
\begin{equation*}
\I \text{ essential } \Leftrightarrow R^f_\I \not\equiv 0 
\Leftrightarrow f_\I \text{ generates } (f).
\end{equation*}
Moreover Theorem ~\ref{annihilatorn} asserts that in this case $R^f_\I$ is of the form \eqref{formen}. 
\end{proof}

\smallskip

\begin{remark}\label{sockelsockel}
Let us conclude this section by a partial generalization of Theorem 3.1 in ~\cite{W}. Even though we cannot explicitly determine $\ann R^f$ we can still give a qualitative description of it in terms of the essential multi-indices. 

The current $R^f_\I$ is a Coleff-Herrera current in the sense of Bj{\"o}rk ~\cite{Bj}, which implies that $\ann R^f_\I$ is irreducible, meaning that it cannot be written as an intersection of two strictly bigger ideals. Thus \eqref{irreducible} yields an \emph{irreducible decomposition} of $\ann R^f$, that is, a representation of the ideal as a finite intersection of irreducible ideals, compare to ~\cite[Corollary~3.4]{W2}. An ideal $\a$ in a local ring $A$ always admits an irreducible decomposition and the number of components in a minimal such is unique; if $\a$ is $\m$-primary it is equal to the minimal number of generators of the socle of $A/\a$, see for example ~\cite{HRS}. In light of \eqref{irreducible} we see that the number of components in a minimal irreducible decomposition of $\ann R^f$ is bounded from above by the number of essential multi-indices. 

In fact Lemma ~\ref{mcurrent} gives us even more precise information: if $\I$ is essential then $\soc(\O_0^n/\ann R^f_\I)$ is generated by the image of $\jac(f_\I)$ under the natural surjection $\O_0^n\to \O_0^n/\ann R^f_\I$. It follows that $\soc(\O^n_0/\ann R^f)$ is generated by the images of $\{\jac(f_\I)\}_{\I \text{ essential}}$ under the natural surjection $\O_0^n\to\O_0^n/\ann R^f$.
\end{remark}

\section{A geometric decomposition}\label{decompsection}
In this section we will see that the current $R^f$ admits a natural decomposition with respect to the Rees valuations of $\a=(f_1,\ldots,f_m)$. 

Given a log-resolution $\pi: X\to (\C^n,0)$ of $\a$, recall from Section ~\ref{coeff} that the analytic continuation of \eqref{uppe} defines a $\Lambda^n V$-valued current $\widetilde R$ on $X$, such that $\pi_* \widetilde R= R$. Let $\widetilde R^f$ denote the corresponding vector-valued current, that is, the current with the coefficients of $\widetilde R$ as entries. From Lemma ~\ref{sing} and Remark ~\ref{sepremark} we know that $\widetilde R^f$ has support on and the SEP with respect to the Rees divisors associated with $\a$. Hence $\widetilde R^f$ can naturally be decomposed as $\sum_{E \text{ Rees divisor}} \widetilde R^f \1_E$. Given a Rees divisor $E$ in $X$, let us consider the current $R^E:=\pi_* (\widetilde R^f\1_E)$.

\begin{lma}\label{oberoende}
The current $R^E$ is independent of the log-resolution. 
\end{lma}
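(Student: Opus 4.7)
The plan is to compare any two log-resolutions through a common third one that dominates both. Given log-resolutions $\pi_1, \pi_2$ of $\a$, I would pick a log-resolution $\pi_3:X_3\to(\C^n,0)$ together with holomorphic modifications $\varpi_j:X_3\to X_j$ for $j=1,2$; such an $X_3$ exists by Hironaka, applied e.g.\ to a desingularization of the fiber product of $X_1$ and $X_2$ over $\C^n$. Since the Rees valuations of $\a$ are intrinsic to $\a$, each Rees valuation $v$ picks out a unique Rees divisor on each of $X_1$, $X_2$, and $X_3$; moreover, the Rees divisor on $X_3$ is the strict transform of the corresponding Rees divisor on $X_j$. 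It therefore suffices to show that for $j\in\{1,2\}$ and each Rees divisor $E\subset X_j$ with strict transform $E^{(3)}\subset X_3$, the currents $(\pi_j)_*(\widetilde R^f_{X_j}\1_E)$ and $(\pi_3)_*(\widetilde R^f_{X_3}\1_{E^{(3)}})$ coincide.

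Fix $j=1$. Since $\pi_3=\pi_1\circ\varpi_1$, and the argument in~\eqref{muck} applied to $\varpi_1$ yields $(\varpi_1)_*\widetilde R^f_{X_3}=\widetilde R^f_{X_1}$, the pushforward-restriction formula~\eqref{joy} reduces the problem to the identity
\begin{equation*}
\widetilde R^f_{X_3}\1_{\varpi_1^{-1}(E)}=\widetilde R^f_{X_3}\1_{E^{(3)}}
\end{equation*}
on $X_3$. To prove this, I would use Lemma~\ref{sing}, which shows that $\widetilde R^f_{X_3}$ is supported on the union of Rees divisors of $X_3$, together with Remark~\ref{sepremark}, which gives the SEP with respect to each Rees divisor. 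Since two distinct Rees divisors meet in a subvariety of strictly smaller dimension, the SEP forces all inclusion-exclusion cross terms to vanish and delivers the decomposition $\widetilde R^f_{X_3}=\sum_{E'}\widetilde R^f_{X_3}\1_{E'}$, summed over Rees divisors $E'$ of $X_3$. For the term $E'=E^{(3)}$, the inclusion $E^{(3)}\subseteq\varpi_1^{-1}(E)$ makes the factor $\1_{\varpi_1^{-1}(E)}$ act trivially. For any other Rees divisor $E'\neq E^{(3)}$, its image $\varpi_1(E')$ is a Rees divisor of $X_1$ distinct from $E$, so $E'\not\subseteq\varpi_1^{-1}(E)$ and the intersection $E'\cap\varpi_1^{-1}(E)$ is a proper subvariety of $E'$; the SEP then kills this term.

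The main point demanding care is the classification of Rees divisors on the dominating resolution: any new exceptional prime introduced by $\varpi_1$ is contracted to a subvariety of $X_1$ of dimension at most $n-2$, so its image under $\Psi^{(3)}=\Psi^{(1)}\circ\varpi_1$ has dimension $<n-1$ and it cannot be a Rees divisor. Consequently, the Rees divisors of $X_3$ are precisely the strict transforms of the Rees divisors of $X_1$, which legitimizes the bijection $E\leftrightarrow E^{(3)}$ used above. This intrinsic matching is the substantive content of the argument; once it is in place, the SEP computation collapsing $\1_{\varpi_1^{-1}(E)}\widetilde R^f_{X_3}$ to $\1_{E^{(3)}}\widetilde R^f_{X_3}$ is routine, and the lemma follows.
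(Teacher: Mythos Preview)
Your argument is correct and follows essentially the same strategy as the paper: reduce to a dominating log-resolution, use $(\varpi)_*\widetilde R_Y=\widetilde R_X$ together with the push-forward restriction formula~\eqref{joy}, and invoke the SEP from Remark~\ref{sepremark}. The only organizational difference is that the paper avoids your global decomposition of $\widetilde R_{X_3}$ over all Rees divisors; instead it works directly with the single divisor $E$, passing on each level to the Zariski-open subset obtained by removing the lower-dimensional loci (namely $A_Y=E_Y\setminus\bigcup E'$ and $A_X=\varpi(A_Y)$, where the $E'$ are the $\varpi$-exceptional primes lying over $E_X$), and then applies~\eqref{joy} and SEP to obtain $\widetilde R_X\1_{E_X}=\widetilde R_X\1_{A_X}=\varpi_*(\widetilde R_Y\1_{A_Y})=\varpi_*(\widetilde R_Y\1_{E_Y})$. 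This sidesteps the need to classify the Rees divisors on $X_3$ and to establish the sum decomposition (which in the paper is deferred to Proposition~\ref{reesdec}), but your derivation of that decomposition from Lemma~\ref{sing} and the SEP is self-contained and valid, so either route works.
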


\begin{proof}
Throughout this proof, given a log-resolution $\pi:X\to (\C^n,0)$, let $\widetilde R_X$ denote the current $\widetilde R$ on $X$, that is, the value of \eqref{uppe} at $\lambda=0$, and let $E_X$ denote the divisor on $X$ associated with the Rees valuation $\vE$. 

Any two log-resolutions can be dominated by a third, see for example  ~\cite[Example~9.1.16]{L2}. 
To prove the lemma it is therefore enough to show that 
$\pi_*(\widetilde R_X\1_{E_X})=\pi_*\varpi_*(\widetilde R_Y\1_{E_Y})$ for log-resolutions
\begin{equation*}
Y \stackrel{\varpi}{\longrightarrow} X \stackrel{\pi}{\longrightarrow} (\C^n,0)
\end{equation*}
of $\a$. 

We will prove the slightly stronger statement that 
$\widetilde R_X\1_{E_X}=\varpi_*(\widetilde R_Y\1_{E_Y})$. 
Observe that $\widetilde R_X=\varpi_* \widetilde R_Y$; compare to \eqref{muck}.
Moreover note that $\varpi^{-1}(E_X)=E_Y \cup \bigcup E'$, where each $E'$ is a divisor such that $\varpi(E')$ is a proper subvariety of $E_X$ (whereas $\varpi(E_Y)=E_X$). Let $A_Y = E_Y\setminus \bigcup E'$ and $A_X=\varpi(A_Y)$. Then $A_X$ and $A_Y$ are Zariski-open sets in $E_X$ and $E_Y$, respectively, and $\varpi^{-1}(A_X)=A_Y$. 
By Remark ~\ref{sepremark} $\widetilde R$ has the SEP with respect to the exceptional divisors, and so, using \eqref{joy} we can now conclude that
\begin{equation*}
\widetilde R_X\1_{E_X}=
\widetilde R_X\1_{A_X}=
\varpi_*(\widetilde R_Y\1_{A_Y})=
\varpi_*(\widetilde R_Y\1_{E_Y}).
\end{equation*}
\end{proof}

\begin{prop}\label{reesdec}
Suppose that $f=(f_1,\ldots,f_m)$ generates an $\m$-primary ideal $\a\subset\O^n_0$. Let $R^f$ be the corresponding Bochner-Martinelli residue current. 
Then
\begin{equation}\label{lupp}
R^f=\sum R^E,
\end{equation}
where the sum is taken over Rees valuations $\vE$ of $\a$ and $R^E$ is defined as above. Moreover each summand $R^E$ is $\not\equiv 0$ and depends only on the $f_j$ for which $\vE(f_j)=\vE(\a)$.
\end{prop}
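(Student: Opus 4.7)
Pick a log-resolution $\pi:X\to(\C^n,0)$ of $\a$ and work with the pseudomeromorphic current $\widetilde R^f$ on $X$ satisfying $\pi_*\widetilde R^f = R^f$. The plan is to show that $\widetilde R^f$ decomposes as the sum of its restrictions to the Rees divisors, and then push this decomposition forward by $\pi$.

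First I would establish the decomposition
\begin{equation*}
\widetilde R^f = \sum_{E \text{ Rees divisor}} \widetilde R^f \1_E
\end{equation*}
on $X$. By Lemma~\ref{sing}, $\widetilde R^f \1_E = 0$ whenever $E$ is an exceptional prime that is \emph{not} a Rees divisor (since then no $\I$ is $E$-essential), so the support of $\widetilde R^f$ is contained in the union $F=\bigcup E$ of the Rees divisors. For each Rees divisor $E$ let $F_E := E \setminus \bigcup_{E'\neq E} E'$; these are pairwise disjoint Zariski-open subsets of $F$ whose complement in $F$ is the union of the intersections $E\cap E'$, each of which is an analytic subvariety of $E$ of smaller dimension. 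By Remark~\ref{sepremark} the current $\widetilde R^f \1_E$ has the SEP with respect to $E$, so $\widetilde R^f \1_E \1_{E\cap E'} = 0$ for $E'\neq E$, and therefore $\widetilde R^f \1_E = \widetilde R^f \1_{F_E}$. Summing and using $\widetilde R^f = \widetilde R^f \1_F = \sum_E \widetilde R^f \1_{F_E}$ gives the claim. Applying $\pi_*$ and invoking \eqref{joy} for the constructible sets $E$ yields $R^f = \sum_E R^E$, and Lemma~\ref{oberoende} ensures the summands are independent of the choice of $\pi$.

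For the nonvanishing, fix a Rees divisor $E$ and choose an $E$-essential multi-index $\I$ (which exists by the discussion in Section~\ref{essential}). By Lemma~\ref{mcurrent} the measure $\widetilde M_\I = \widetilde R^f_\I \wedge \pi^*(df_\I)^n/(n!(2\pi i)^n) \otimes e_\I^*\wedge e_\I$ is strictly positive on $E$, and the computation in \eqref{kol} shows that $\widetilde M_\I \1_E$ is a positive, absolutely continuous, generically nonvanishing measure on $E$. Pushing forward by $\pi$ and using that $\pi(E)=\{0\}$, the current $\pi_*(\widetilde M_\I \1_E) = R^E_\I \wedge df_{i_n}\wedge\cdots\wedge df_{i_1}/(2\pi i)^n$ is a strictly positive multiple of the Dirac measure at $0$; in particular $R^E_\I \not\equiv 0$, hence $R^E\not\equiv 0$.

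Finally, the dependence statement is a direct consequence of Lemma~\ref{sing}: for each multi-index $\I$ the current $\widetilde R^f_\I \1_E$ depends only on those $f_k$ with $\vE(f_k)=\vE(\a)$, so the same holds for the pushforward $R^E$. The main technical point is the first step, namely justifying that the inclusion–exclusion on $F$ collapses to a clean sum over Rees divisors; this is where the SEP of $\widetilde R^f \1_E$ from Remark~\ref{sepremark}, together with the dimension count on pairwise intersections $E\cap E'$, does the essential work.
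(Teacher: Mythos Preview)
Your proof is correct and follows essentially the same approach as the paper: reduce to a decomposition of $\widetilde R^f$ on a log-resolution using Lemma~\ref{sing} for the support and Remark~\ref{sepremark} for the SEP, then invoke the existence of an $E$-essential $\I$ together with Lemma~\ref{mcurrent} (equivalently, the proof of Theorem~B) for nonvanishing, and the second part of Lemma~\ref{sing} for the dependence statement. Your treatment of the overlap sets $E\cap E'$ via the auxiliary sets $F_E$ is more explicit than the paper's one-line assertion $\widetilde R^f=\widetilde R^f\1_{\bigcup E}=\sum_E\widetilde R^f\1_E$, but it encodes exactly the same SEP argument.
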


\begin{proof}
Assume that $E$ is a Rees divisor. By Section ~\ref{essential} there is at least one $E$-essential multi-index; let $\I$ be such a multi-index. Then, by (the proof of) Theorem ~B the current $\pi_* (\widetilde R_\I\1_E)\not\equiv 0$, which means that $R^E$ has at least one nonvanishing entry.

We also get that $\widetilde R^f$ has support on the union of the Rees divisors. Moreover, by Remark ~\ref{sepremark} $\widetilde R^f\1_E$ has the SEP with respect to $E$. 
Thus
\[
\widetilde R^f=\widetilde R^f\1_{\bigcup_{E \text{ Rees divisor}} E}=
\sum_{E \text{ Rees divisor}} \widetilde R^f\1_E,
\]
which proves \eqref{lupp}.

The last statement follows immediately from the second part of Lemma ~\ref{sing}.
\end{proof}

\section{The monomial case}\label{monomialcase}
Let $\a \subset \O_0^n$ be an $\m$-primary monomial ideal generated by monomials $z^{a^j}$, $1\leq j\leq m$. Recall that the Newton polyhedron $\np(\a)$ is defined as the convex hull in $\mathbb R^n$ of the exponent set $\{a^j\}$ of $\a$. The Rees-valuations of $\a$ are monomial and in 1-1 
correspondence with the compact facets (faces of maximal dimension) of $\np(\a)$. 
More precisely the facet $\tau$ with normal vector $\rho=(\rho_1, \ldots, \rho_n)$ corresponds to the monomial valuation
$\ord_\tau(z_1^{a_1}\cdots z_n^{a_n})=\rho_1a_1+\ldots + \rho_na_n$, see for example ~\cite[Theorem~10.3.5]{HS}.

Let us interpret our results in the monomial case. First, consider the notion of essential multi-indices. Note that a monomial $z^{a}\in\a$ satisfies that $\ord_\tau(z^{a})=\ord_\tau(\a)$ precisely if $a$ is contained in the facet $\tau$. Thus in light of \eqref{regna} a necessary condition for $\I=\{i_1,\ldots, i_n\}\subseteq\{1,\ldots,m\}$ to be $E_\tau$-essential (if $E_\tau$ denotes the Rees divisor associated with $\tau$) is that $\{a^i\}_{i\in\I}$ are all contained in $\tau$. Moreover, for \eqref{rosa} to be nonvanishing the determinant $|a^i|$ has to be non-zero; in other words $\{a^i\}_{i\in\I}$ needs to span $\R^n$. In ~\cite{W} an exponent set $\{a^i\}_{i\in\I}$ was said to be essential if all $a^i$ are contained in a facet of $\np(\a)$ and $|a^i|\neq 0$. Our notion of essential is thus a direct generalization of the one in ~\cite{W}. Moreover Theorem ~B can be seen as a generalization of (the first part of) Theorem 3.1 in ~\cite{W}, which asserts that $R^f_\I\not\equiv 0$ precisely if $\I$ is essential. In fact, Theorem 3.1 also gives an explicit description of $\ann R^f_\I$. Moreover, Theorem ~\ref{annihilatorn} and Proposition ~\ref{denandra} are direct generalizations of Theorem 3.2 and Corollary 3.9, respectively, in ~\cite{W}.

Concerning the decomposition in Section ~\ref{decompsection} observe that in the monomial case each multi-index $\I$ can be essential with respect to at most one Rees divisor. Indeed, clearly a set of points in $\R^n$ cannot be contained in two different facets and at the same time span $\R^n$. Hence in the monomial case the decomposition $R^f=(R^f_\I)$ 
is a refinement of the decomposition \eqref{lupp}; in fact the nonvanishing entries of $R^E$ are precisely the $R^f_\I$ for which $\I$ is $E$-essential. In particular, 
\begin{equation*}
\ann R =\bigcap \ann R^E \quad \text { and } \quad \ann R^E=\bigcap_{\I ~~ E-\text{essential}}\ann R^f_\I.
\end{equation*}
This is however not true in general. For example, if $n=m$, the set $\I=\{1,\ldots, n\}$ is essential with respect to all Rees divisors of $\a$ (and the number of Rees divisors can be $ > 1$). Also, in general, $\bigcap \ann R^E$ is strictly included in $\ann R$, see Example ~\ref{telifon}.

\section{Examples}\label{teflon}
Let us consider some examples that illustrate the results in the paper. 

\begin{ex}\label{concrete} ~\cite[Example~3.4]{W}
Let $\a\subset \O_0^2$ be the monomial ideal 
$(f_1,\ldots, f_5)=(z^8, z^6w^2, z^2w^3, zw^5, w^6)$. 
The exponent set of $\a$ is depicted in Figure 1, where we have also drawn $\np(\a)$. 
\begin{figure}\label{lefigur}
\begin{center}
\includegraphics{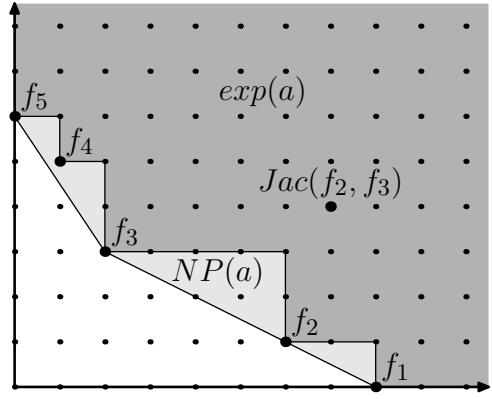}
\caption{The exponent set and Newton polyhedron of $\a$ in Example ~\ref{concrete}}
\end{center}
\end{figure}
The Newton polyhedron has two facets with normal directions $(1,2)$ and $(3,2)$ respectively. Thus there are two Rees divisors $E_1$ and $E_2$ associated with $\a$ with monomial valuations  
$\ord_{E_1}(z^a w^b)=a + 2 b$ and
$\ord_{E_2}(z^a w^b)=3 a + 2 b$, respectively. 
Now the index sets $\{1,2\}$, $\{1,3\}$, and $\{2,3\}$ are essential with respect to $E_1$ whereas $\{3,5\}$ is $E_2$-essential. Thus according to Theorem ~B $R^f$, which a priori has one entry for each multi-index $\{i,j\}\subseteq\{1,\ldots,5\}$, has four non-zero entries corresponding to the four essential index sets. 
Moreover, by Lemma ~\ref{pink} and Remark ~\ref{sockelsockel}, we have that for these index sets $\jac(f_\I)\notin \ann R^f$, whereas $\m\jac(f_\I)\subseteq\ann R^f$. For example, 
$\jac (z^6 w^2, z^2 w^3) = 14 z^7 w^4 \notin \ann R^f$, and thus, since $z^7 w^4 \in \a$, one sees directly that $\ann R^f\varsubsetneq \a$. Moreover $z\jac (z^6 w^2, z^2 w^3) = 14 z^8 w^4 \in \overline{\a^2}\setminus\ann R^f$.
\end{ex}

\begin{ex}\label{icke-monom}
Let $\a\subset \O_0^2$ be the product of the ideals $\a_1=(z, w^2)$, $\a_2=(z-w,w^2)$, and $\a_3=(z+w, w^2)$, each of which is
monomial in suitable local coordinates.
The ideal $\a_i$ has a unique (monomial) Rees-valuation $\ord_{E_i}$,
given by 
$\ord_{E_1}(z^a w^b) = 2a + b$, 
$\ord_{E_2}((z-w)^a w^b) = 2a + b$, and 
$\ord_{E_3}((z+w)^a w^b) = 2a + b$, respectively. 
By Corollary ~\ref{dimension2} the Rees-valuations of $\a$ are precisely $\ord_{E_1}$, $\ord_{E_2}$, and $\ord_{E_3}$.

Note that after blowing up the origin once,
the strict transform of $\a$ has support at exactly 
three points $x_1$, $x_2$, $x_3$ 
on the exceptional divisor; it follows that $\a$ is not a monomial ideal. 
A log-resolution $\pi:X\to(\C^2,0)$ 
of $\fa$ is obtained by further blowing up $x_1$, $x_2$ and $x_3$,
thus creating exceptional primes $E_1$, $E_2$ and $E_3$.

Now $\a$ is generated by 
\[
\{f_1,\ldots,f_4\}=
\{z(z-w)(z+w), ~z(z-w)w^2, ~z(z+w)w^2, ~(z-w)(z+w)w^2\}.
\]
Observe that none of these generators can be omitted; hence $\a$ is not a complete intersection ideal. Also, note that for each Rees divisor there is exactly one essential $\I\subseteq\{1,\ldots,4\}$. For example $\ord_{E_1}(f_1)=\ord_{E_1}(f_4)=\ord_{E_1}(\a)=4$, whereas $\ord_{E_1}(f_k)>4$ for $k=2,3$, and so $\I=\{1,4\}$ is the only $E_1$-essential index set. For symmetry reasons, $\{1,3\}$ is $E_2$-essential and $\{1,2\}$ is $E_3$-essential. 

Let us compute $R^f_{\{1,4\}}$. 
To do this, let $y\in X$ be the intersection point
of $E_1$ and the strict transform of $\{z=0\}$.
We choose coordinates $(\sigma,\tau)$ at $y$ so that
$E_1=\{\sigma=0\}$ and 
$(z,w)=\pi(\sigma,\tau)=(\sigma^2\tau,\sigma)$. Then  
$\pi^* s_{\{1,4\}}= \bar \sigma^4 (1-\bar\sigma^2\bar\tau^2) ( \bar \tau  e_1 + e_4)$
and it follows that
\[
\widetilde R_{\{1,4\}}=
- ~\dbar\left [\frac{1}{\sigma^8}\right ] \wedge \frac{d\bar \tau}{(1+|\tau|^2)^2}\wedge e_4\wedge e_1.
\]
Let $\phi=\varphi dw\wedge dz$ be a test form at $0\in\C^n$. 
Near $y\in X$ we have 
$\pi^* dw\wedge dz = \sigma^2 d\sigma \wedge d\tau$ and so 
\begin{multline*}
R^f_{\{1,4\}}\cdot \phi=  
\int 
\dbar\left [\frac{1}{\sigma^6}\right ] \wedge d\sigma \wedge \frac{d\bar \tau\wedge d\tau }{(1+|\tau|^2)^2}
~\varphi(\sigma^2 \tau, \sigma)
=\\
\frac{2\pi i}{5!} ~\varphi_{0,5}(0, 0) \int_\tau 
 \frac{d\bar \tau\wedge d\tau}{(1+|\tau|^2)^2}
= 
\frac{(2\pi i)^2}{5!} ~\varphi_{0,5}(0, 0)
= 
\dbar\left [\frac{1}{z}\right ]\wedge
\dbar\left [\frac{1}{w^6}\right ]\cdot\phi.
\end{multline*}
Hence 
$\ann R^f_{\{1,4\}}=(z,w^6)$. 
Similarly, 
$\ann R^f_{\{1,3\}}=(z-w,w^6)$ and 
$\ann R^f_{\{1,2\}}=(z+w,w^6)$, and so 
\[
\ann R^f = 
(z(z-w)(z+w), w^6).
\]
Note in particular that $\ann R^f\varsubsetneq \a$ in accordance with Theorem ~\ref{annihilatorn}. 
\end{ex}

\begin{ex}\label{fragan}
Let $\a\in\O^2_0$ be the monomial ideal $(z^2, zw, w^2)$ and let $f=f(B)$ be the tuple of generators: $f=(f_1,f_2,f_3)=(z^2, zw + w^2, B w^2)$. A computation similar to the one in Example ~\ref{icke-monom} yields that 
\begin{equation*}
R^f_{\{1,2\}} = 
C_0 ~\dbar\left [\frac{1}{z^3}\right ]\wedge \dbar\left [\frac{1}{w}\right ]
+ 2 ~ C_1 ~\dbar\left [\frac{1}{z^2}\right ]\wedge \dbar\left [\frac{1}{w^2}\right ],
\end{equation*}
where
\[
C_\ell=\frac{1}{2\pi i}\int\frac{|\tau|^{2\ell} d\bar\tau\wedge d\tau}
{(1+|\tau|^2|1+\tau|^2 + |B|^2|\tau|^4)^2}.
\]
Note that $R^f_{\{1,2\}}$ and its annihilator ideal depend not only on $f_1$ and $f_2$ but also on $f_3$. Indeed, a polynomial of the form $D z^2-Ew$ is in $\ann R^f_{\{1,2\}}$ if and only if $D/E=2C_1/C_0$, but $2C_1/C_0$ depends on the parameter $B$.  

However, $\ann R^f$ is independent of $B$. In fact, $\ann R^f_{\{1,3\}}=(z^2, w^2)$ and $\ann R^f_{\{2,3\}}=(z, w^3)$, which implies that $\ann R^f=\bigcap \ann R^f_\I=(z^3, z^2w, zw^2, w^3)$. 
\end{ex}

\begin{remark}\label{Dremark}
Example ~\ref{fragan} shows that the vector valued current $R^f$ depends on the choice of the generators of the ideal $(f)$ in an essential way. Still, in this example $\ann R^f$ stays the same when we vary $f$ by the parameter $B$. Also, we would get the same annihilator ideal if we chose $f$ as $(z^2, zw, w^2)$, see ~\cite[Theorem~3.1]{W}. 

We have computed several other examples of currents $R^f$ 
in all of which $\ann R^f$ is unaffected by a change of $f$ as long as the ideal $(f)$ stays the same. To be able to answer Question D in general, however, one probably has to understand the delicate interplay between contributions to $R^f$ and $R^f_\I$ from different Rees divisors, compare to Example ~\ref{telifon} below.
\end{remark}

\begin{ex}\label{telifon}
Let $\a\in\O^2_{0}$ be the complete intersection ideal $(f_1,f_2)=(z^3,w^2-z^2)$. After blowing up the origin the strict transform of $\a$ has support at two points $x_1$ and $x_2$ corresponding to where the strict transforms of the lines $z=w$ and $z=-w$, respectively, meet the exceptional divisor. Further blowing up these points yields a log-resolution of $\a$ with Rees divisors $E_1$ and $E_2$ corresponding to $x_1$ and $x_2$, respectively. 

A computation as in Example ~\ref{icke-monom} yields that
\begin{multline*}
2 R^{E_1}=
-\dbar\left [\frac{1}{z^4}\right ]\wedge \dbar\left [\frac{1}{w}\right ]
+\dbar\left [\frac{1}{z^3}\right ]\wedge \dbar\left [\frac{1}{w^2}\right ]\\
-\dbar\left [\frac{1}{z^2}\right ]\wedge \dbar\left [\frac{1}{w^3}\right ]
+\dbar\left [\frac{1}{z}\right ]\wedge \dbar\left [\frac{1}{w^4}\right ];
\end{multline*}
$R^{E_2}$ looks the same but with the minus signs changed to plus signs. 
Hence
\begin{equation*}
R^f=R^{E_1}+R^{E_2}=
\dbar\left [\frac{1}{z^3}\right ]\wedge \dbar\left [\frac{1}{w^2}\right ]
+\dbar\left [\frac{1}{z}\right ]\wedge \dbar\left [\frac{1}{w^4}\right ] . 
\end{equation*}
Note that $\ann R^f$ is indeed equal to $\a$, which we already knew by the Duality Principle. 
Observe furthermore that $z^3 R^{E_1}= - \dbar [1/z]\wedge\dbar [1/w]$, so that $z^3\notin\ann R^{E_1}$. Hence we conclude that in general
\begin{equation*}
\bigcap \ann R^E\varsubsetneq\ann R^f.
\end{equation*} 
\end{ex}

\def\listing#1#2#3{{\sc #1}:\ {\it #2},\ #3.}

\end{document}